\documentclass[11pt,twoside]{amsart}
\usepackage{amsmath, amsthm, amscd, amsfonts, amssymb, float, graphicx, color, xcolor, soul }
\usepackage[bookmarksnumbered, colorlinks, plainpages]{hyperref}
\usepackage{graphicx}
\usepackage{epstopdf}
\usepackage{rotating}
\input xy

\newtheorem{theorem}{Theorem}
\newtheorem{remark}{Remark}
\newtheorem{corollary}{Corollary}
\newtheorem{lemma}{Lemma}
\newtheorem{question}{Question}
\newtheorem{proposition}{Proposition}
\newtheorem{definition}[equation]{Definition}
\newtheorem{conjecture}[equation]{Conjecture}
\usepackage[bottom]{footmisc}

\newcommand{\DCI}{{\rm DCI}}
\newcommand{\Cay}{{\rm Cay}}
\newcommand{\BCI}{{\rm BCI}}

\newcommand{\CI}{{\rm CI}}
\newcommand{\Sym}{{\rm Sym}}
\newcommand{\BCay}{{\rm BCay}}
\newcommand{\Spec}{{\rm Spec}}
\newcommand{\Aut}{{\rm Aut}}

\date{}
\def\pn{\par\noindent}
\begin{document}
\title{On $\BCI$-groups and $\CI$-groups‌}
\author{Asieh Sattari, Majid Arezoomand and Mohammad A. Iranmanesh$^*$}
\thanks{{\scriptsize
\hskip -0.4 true cm MSC(2010): Primary 05C25; Secondary: 05C60, 05E18
\newline Keywords: $\BCI$-group, $\CI$-group, bi-Cayley graph.\\
$*$Corresponding author}}
\maketitle
\begin{abstract}
Let $G$ be a finite group and
$S$ be a subset of $G.$ A bi-Cayley graph $\BCay(G,S)$ is a simple and an undirected
graph with vertex-set $G\times\{1,2\}$ and edge-set
$\{\{(g,1),(sg,2)\}\mid g\in G, s\in S\}$. A bi-Cayley graph $\BCay(G,S)$ is called a
$\BCI$-graph if for any bi-Cayley graph $\BCay(G,T)$, whenever
$\BCay(G,S)\cong\BCay(G,T)$ we have $T=gS^\sigma$ for some $g\in G$
and $\sigma\in\Aut(G).$ A group $G$ is called a $\BCI$-group if
every bi-Cayley graph of $G$ is a $\BCI$-graph. In this paper, we
showed that every $\BCI$-group is a $\CI$-group, which gives a positive answer to a conjecture
proposed by Arezoomand and Taeri in \cite{arezoomand1}. Also we
proved that there is no any non-Abelian $4$-$\BCI$-simple group. In
addition all $\BCI$-groups of order $2p$, $p$ a prime, are characterized.
\end{abstract}

\section{Introduction}
Throughout this paper all graphs and groups are finite. Graphs are simple and undirected, where by a simple
graph we mean a graph with no multiple edges or loops. Our notation are standard and can be found in \cite{Mil}

Let $G$ be a group and $S$ be a
subset of $G$ such that $1\notin S$ and $S=S^{-1}$. Then $\Cay(G,S)$
is a simple and undirected graph with vertex set $G$ and edge set $ E=\{\{g,sg\}|s\in S, g\in G\}$.
A fundamental problem that about 50 years ago arose, is
\emph{Isomorphism Problem} for two Cayley graphs. That is, when two
Cayley graphs $\Cay(G,S)$ and $\Cay(H,T)$ are isomorphic? It follows
quickly from the definition that for any automorphism
$\alpha\in\Aut(G)$, the graphs $\Cay(G,S)$ and $\Cay(G,S^\alpha)$
are isomorphic, namely, $\alpha$ induces an isomorphism between
these graphs. Such an isomorphism is called a {\it Cayley
isomorphism}. In 1967, Ad\'{a}m \cite{Adam} conjectured that two Cayley
graphs over the cyclic group $\mathbb{Z}_n$ are isomorphic if and only
if there is a Cayley isomorphism which maps one to the other. Soon
afterwards, Elspas and Turner \cite{Elspas} found the counterexample
for $n=8$. This also motivated the following definition. A Cayley
graph $\Cay(G,S)$ is a $\CI$ graph if whenever $\Cay(G,S)\cong
\Cay(G,T)$ for some subset $T$ of $G,$ then $T = S^\alpha$ for some
$\alpha\in\Aut(G)$. The group $G$ is an $m$-$\CI$-group if every
Cayley graph over $G$ of valency at most $m$ is a $\CI$-graph, and
$G$ is a $\CI$-group if every Cayley graph over $G$ is a
$\CI$-graph. The problem of classifying finite $\CI$-groups is
still open \cite{Li,MS,Spiga}. Let $G$ be a finite group and
$S$ be a subset of $G.$ A bi-Cayley graph $\BCay(G,S)$ is an undirected graph
with vertex-set $G\times\{1,2\}$ and edge-set
$\{\{(g,1),(sg,2)\}\mid g\in G, s\in S\}.$

In 2008, motivated by the
concepts of $\CI$ graph, $m$-$\BCI$-group and $\CI$-group, Xu et al.
\cite{j.xu} introduced the concepts $\BCI$-graph, $m$-$\BCI$-group
and $\BCI$-group, respectively. We say that a bi-Cayley graph
$\BCay(G,S)$ is a $\BCI$-graph if whenever $\BCay(G,S)\cong
\BCay(G,T)$ for some subset $T$ of $G$, then $T = gS^\alpha$ for
some $g\in G$ and $\alpha\in\Aut(G)$. The group $G$ is
an $m$-$\BCI$-group if every bi-Cayley graph over $G$ of valency at
most $m$ is a $\BCI$-graph, and $G$ is a $\BCI$-group if every
bi-Cayley graph over $G$ is a $\BCI$-graph. The theory of
$\BCI$-graphs and $\BCI$-groups is less developed as in the case
of $\CI$-graphs and $\CI$-groups.
Jin and Liu in a series of papers \cite{jin1,jin2,jin4}
obtained several basic properties about $\BCI$-graphs and
$\BCI$-groups. $\BCI$-graphs and
$\BCI$-groups are studied by Koike et.al. in \cite{koike1,koike2,koike3,koike4}
and by the second author in \cite{arezoomand1, arezoomand2}.

Our primary motivation by studying $\BCI$-graphs and $\BCI$-groups is that these objects
can bring new insight into the old problem of characterizing
$\CI$-groups. In \cite{arezoomand1} it is conjectured that every
$\BCI$-group is a $\CI$-group.

This paper is organized as follows. In Section 2, we proved that every $\BCI$-group is
a $\CI$-group. In Section 3, we classify cyclic $\BCI$-$p$-groups and we will show that there
is no any non-Abelian $4$-$\BCI$ simple group. In Section 4, $\BCI$-groups of order $2p$
are considered and we prove that $\mathbb{Z}_{2p}$ is a $\BCI$-group. Indeed we show that
$\mathbb{Z}_{2p}$ is a $\BCI$-group and $\BCI$-groups of order $2p$ where $p$ is
a prime are characterized.

\section{The relation between $\BCI$-groups and $\CI$-groups }
In this section we prove that set of
finite $\BCI$-groups is a subset of the set of finite
$\CI$-groups. It causes to shift many properties from
$\CI$-groups to $\BCI$-groups. We mention some of them here.

\begin{theorem}\label{t1}Every finite $\BCI$-group is a $\CI$-group.
\end{theorem}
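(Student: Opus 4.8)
The plan is to show that a Cayley graph isomorphism problem can be "encoded" inside a bi-Cayley graph isomorphism problem, so that the $\BCI$ property forces the $\CI$ conclusion. The natural idea is the following: given a group $G$ and an inverse-closed subset $S$ with $1\notin S$, the Cayley graph $\Cay(G,S)$ and the bi-Cayley graph $\BCay(G,S)$ are closely related. In fact $\BCay(G,S)$ is precisely the bipartite double cover of $\Cay(G,S)$ — its two parts are $G\times\{1\}$ and $G\times\{2\}$, and $(g,1)\sim (sg,2)$ exactly when $g$ and $sg$ are adjacent in $\Cay(G,S)$. Since $\Cay(G,S)$ is vertex-transitive and (for most $S$) connected, its bipartite double cover determines it: a connected non-bipartite graph is reconstructible from its bipartite double cover, and when $\Cay(G,S)$ is itself bipartite one must argue slightly more carefully but can still recover enough information. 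So the first step is to record this structural dictionary between $\Cay(G,S)$ and $\BCay(G,S)$.

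Next I would run the following argument. Suppose $G$ is a $\BCI$-group; we must show it is a $\CI$-group. Let $S,T\subseteq G$ be inverse-closed, $1\notin S,T$, with $\Cay(G,S)\cong \Cay(G,T)$. Applying the bipartite-double-cover functor, $\BCay(G,S)\cong\BCay(G,T)$. Since $G$ is a $\BCI$-group, there exist $g\in G$ and $\sigma\in\Aut(G)$ with $T=gS^\sigma$. The remaining task is to upgrade this to $T=S^\tau$ for some $\tau\in\Aut(G)$, i.e.\ to remove the translation factor $g$. Here one uses that $S$ and $T$ are symmetric and avoid the identity: from $T=T^{-1}$ and $S^\sigma=(S^\sigma)^{-1}$ one gets $gS^\sigma = (gS^\sigma)^{-1} = (S^\sigma)^{-1}g^{-1} = S^\sigma g^{-1}$, so conjugation by $g$ stabilizes $S^\sigma$ setwise; and since $1\notin T=gS^\sigma$ we get $g\notin (S^\sigma)^{-1}=S^\sigma$, while... this is exactly the point where care is needed, because in general the translate need not be absorbed into an automorphism. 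The cleaner route is to observe that any isomorphism $\Cay(G,S)\to\Cay(G,T)$ can be normalized to fix the identity vertex, and correspondingly the bi-Cayley isomorphism can be chosen to respect basepoints; then the $\BCI$ conclusion with a basepoint-preserving isomorphism yields $g=1$ directly, giving $T=S^\sigma$.

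The main obstacle, and the step I would spend the most effort on, is precisely the handling of connectedness and bipartiteness in the double-cover correspondence, together with making the basepoint normalization rigorous. If $\Cay(G,S)$ is disconnected or bipartite, its bipartite double cover can fail to be connected or can split as two copies of $\Cay(G,S)$, and then an isomorphism $\BCay(G,S)\cong\BCay(G,T)$ need not descend to $\Cay(G,S)\cong\Cay(G,T)$ in the naive way — one may need to swap the two "sheets." I would deal with this by treating the connected non-bipartite case first (where the argument is clean), then reducing the bipartite case to it (e.g.\ by passing to $S\cup\{1\}$ or by an ad hoc analysis of the two components), and finally handling disconnected $\Cay(G,S)$ by restricting to the component of the identity, which is itself a Cayley graph of $\langle S\rangle$. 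Assembling these cases is the technical heart of the proof; the group-theoretic bookkeeping to strip off the translation $g$ is then routine given a basepoint-preserving isomorphism.
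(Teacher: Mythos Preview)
Your double-cover idea is natural, and you correctly identify that the crux is eliminating the translation $g$ from the conclusion $T=gS^\sigma$. But the proposed fix --- ``choose a basepoint-preserving isomorphism so that the $\BCI$ conclusion yields $g=1$'' --- does not work. The $\BCI$ property is a pure existence statement: whenever $\BCay(G,S)\cong\BCay(G,T)$, \emph{some} pair $(g,\sigma)$ exists with $T=gS^\sigma$. Nothing ties $(g,\sigma)$ to the particular isomorphism you fed in, so normalising the isomorphism to fix $(1,1)$ gives you no leverage on $g$. Likewise, your observation that $gS^\sigma=S^\sigma g^{-1}$ (from $T=T^{-1}$) only says $S^\sigma$ is closed under $s\mapsto gsg$; it does not produce an automorphism $\tau$ with $T=S^\tau$. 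So as it stands the argument stalls exactly where you flag ``care is needed''.

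The paper's proof supplies the missing idea, and it is precisely the trick you mention only in passing for the bipartite sub-case: work with $S\cup\{1\}$ and $T\cup\{1\}$ rather than $S$ and $T$. One first checks (this is the cited Lemma~4.7) that $\Cay(G,S)\cong\Cay(G,T)$ implies $\BCay(G,S\cup\{1\})\cong\BCay(G,T\cup\{1\})$; your double-cover description makes this transparent, since adding $1$ just adjoins the canonical perfect matching between the two sheets. Now $\BCI$ gives $S\cup\{1\}=g(T\cup\{1\})^\alpha$. The point of inserting $1$ is that it anchors both sides: if $g\neq1$ then $g$ must lie in $S$ and $g^{-1}$ must lie in $T^\alpha$, and one can peel these off and induct on $|S|$ to conclude $S=T^\alpha$ with the \emph{same} $\alpha$. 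No basepoint argument, no connected/bipartite case split, is needed. The moral is that the translation $g$ is eliminated combinatorially, using the distinguished element $1$ in the connection set, rather than by controlling the isomorphism.
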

\begin{proof} Let $G$ be a $\BCI$-group and $S,T \subseteq G$ such that
$S=S^{-1}$ and $T = T^{-1}$ and $1\notin S\cap T.$ Suppose that
$\Cay(G,S)\cong \Cay(G,T).$ We prove that $S=T^{\alpha}$ for
$\alpha \in \Aut(G).$ By \cite[Lemma 4.7]{arezoomand1}, we have
$\BCay(G,S \cup \{1\})\cong \BCay(G,T \cup \{1\}).$ As $G$ is a
$\BCI$-group, there exist $ g\in G$ and $\alpha\in\Aut(G)$ such
that $S\cup \{1\}=g(T\cup\{1\})^{\alpha}.$ If $g=1$ then
$S=T^{\alpha}.$ So we may assume that $g\neq 1.$ We will prove the
theorem by induction on $|S|.$

If $|S|=1$ then $S=\{s_0\}$ and $T=\{t_0\}$ for some $s_0, t_0\in G$. Then
$\{s_0, 1\}= g\{t_0, 1\}^{\alpha}= \{gt_0^{\alpha}, g\}$.
Since $g\neq 1$, we have $g=s_0$ and $gt_0^{\alpha}=1$. Therefore,
$s_0t_0^{\alpha}=1.$ This implies that $t_0^{\alpha}=s_0^{-1}$.
Since $S=S^{-1}$ we conclude that $s_0^{-1}=s_0=t_0^{\alpha}.$ So $S=T^{\alpha}$ as desired.

Assume that the statement is true for $|S|<n $. Let $|S|=n$, $S=\{s_1,s_2,...,s_n\}$ and $T=\{t_1,t_2,...,t_n\}$.
Since $S\cup \{1\}= g_0(T\cup\{1\})^{\alpha}$ for some $g_0\in G$, we conclude that $\{s_1,s_2,...,s_n,1\}= \{g_0t_1^{\alpha},
..., g_0t_n^{\alpha}, g_0\}$. As $g_0\neq 1$ and $g_0\in S$ we can assume that $g_0=s_n$ and $g_0{t_n}^{\alpha}=1.$ Hence
$t_n^{\alpha}= g_0^{-1}$ which implies that $t_n^{\alpha}=s_n^{-1}.$ Set $S_0=S\setminus\{s_n^{-1}\}$
and $T_0=T\setminus \{t_n\}$. Then we have $S_0\cup\{1\}= g_0(T_0\cup\{1\})^{\alpha} $ and $|S_0|<n$. By the
induction hypothesis, $S_0= {T_0}^{\alpha}$ and therefore, $S=S_0\cup \{s_n^{-1}\} = T_0^{\alpha}\cup\{t_n^{\alpha}\}=T^{\alpha}$.
\end{proof}

\begin{corollary}\label{p1} Let $G$ be a finite group, $1\leq m <\mid G\mid$
and $G$ has the $(m+1)$-$\BCI$ property. Then $G$ has the $m$-$\CI$ property.
\end{corollary}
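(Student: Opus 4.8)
The plan is to rerun the proof of Theorem~\ref{t1} while keeping track of valencies. Fix a Cayley graph $\Cay(G,S)$ of valency at most $m$, so $S=S^{-1}$, $1\notin S$ and $|S|\le m$, and suppose $\Cay(G,S)\cong\Cay(G,T)$ for some $T$; since isomorphic graphs have the same valency, $T=T^{-1}$, $1\notin T$ and $|T|=|S|\le m$. The goal is to produce $\alpha\in\Aut(G)$ with $S=T^\alpha$. First I would apply \cite[Lemma 4.7]{arezoomand1} to pass to $\BCay(G,S\cup\{1\})\cong\BCay(G,T\cup\{1\})$. The one new point is the valency count: because $1\notin S$, the set $S\cup\{1\}$ has exactly $|S|+1\le m+1$ elements, so $\BCay(G,S\cup\{1\})$ has valency at most $m+1$; hence the hypothesis that $G$ has the $(m+1)$-$\BCI$ property applies and tells us that $\BCay(G,S\cup\{1\})$ is a $\BCI$-graph.

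From that point the argument is exactly the one in the proof of Theorem~\ref{t1}. We obtain $g\in G$ and $\alpha\in\Aut(G)$ with $S\cup\{1\}=g(T\cup\{1\})^\alpha$; if $g=1$ we are done, and if $g\ne 1$ we induct on $|S|$, noting that $g\in S$ and $g\,t^\alpha=1$ for some $t\in T$, so $t^\alpha=g^{-1}\in S$, and then peel off $g^{-1}$ from $S$ and $t$ from $T$ to obtain sets $S_0,T_0$ of smaller size with $S_0\cup\{1\}=g(T_0\cup\{1\})^\alpha$; the induction returns $S_0=T_0^\alpha$ and hence $S=T^\alpha$. Crucially, this descent only manipulates the set equation already produced inside $G$ and never appeals again to the $\BCI$ hypothesis, so no further valency bookkeeping is needed. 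Running over all such $S$ shows that $G$ has the $m$-$\CI$ property.

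There is no genuine obstacle here; the corollary is the valency-bounded version of Theorem~\ref{t1}. The only things worth double-checking are that the sole use of the $\BCI$ assumption in the proof of Theorem~\ref{t1} really occurs at a bi-Cayley graph of valency $|S|+1$ (so that $m+1$, not $m$, is the right threshold), and the boundary case $|S|=m$, where that auxiliary bi-Cayley graph has valency exactly $m+1$. The restriction $m<|G|$ simply ensures $m+1\le|G|$, so that ``valency at most $m+1$'' is a nonvacuous condition for bi-Cayley graphs of $G$, whose valencies range up to $|G|$.
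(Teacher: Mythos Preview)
Your proposal is correct and is exactly the argument the paper has in mind: the corollary is stated without proof immediately after Theorem~\ref{t1}, and the intended justification is precisely your observation that the single invocation of the $\BCI$ hypothesis in that proof occurs for the bi-Cayley graph $\BCay(G,S\cup\{1\})$ of valency $|S|+1\le m+1$, after which the remaining descent is purely set-theoretic. Your remarks on the boundary case $|S|=m$ and on the role of the bound $m<|G|$ are apt and make the deduction explicit.
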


By Theorem \ref{t1} and Corollary \ref{p1} we can find many results which are obtained in $\CI$-groups.
The best list of $\CI$-groups is due to Li in \cite{Li}. It should be mentioned that their proof was incomplete,
but this was corrected by Dobson in \cite{DMS}. Here for Sylow subgroups we mention two remarkable results below.

\begin{proposition}
Let $G$ be a $\BCI$-group of odd order. Then a Sylow 3-subgroup is $\mathbb{Z}_{3^k}$, $k=1,2,3$ and
if $p\neq 3$, then Sylow $p$-subgroups are elementary Abelian. Furthermore, $G$ is an Abelian group, or $G$
has an Abelian normal subgroup of index 3.
\end{proposition}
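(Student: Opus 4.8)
The plan is to derive the proposition directly from Theorem~\ref{t1} together with the known structure theory of finite $\CI$-groups. By Theorem~\ref{t1}, a $\BCI$-group is a $\CI$-group, so $G$ is simply a $\CI$-group of odd order; the assertion is then precisely the specialisation to odd order of the structural results on $\CI$-groups collected by Li in \cite{Li} (whose proof was completed by Dobson in \cite{DMS}). Thus I would begin by recording ``$G$ is a $\CI$-group by Theorem~\ref{t1}'' and then quote the relevant part of that classification.

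In a little more detail: Li's description of finite $\CI$-groups bounds the Sylow $p$-subgroups, forcing elementary abelian Sylow $p$-subgroups for $p\geq 5$ and a Sylow $3$-subgroup of the form $\mathbb{Z}_{3^k}$ with $k\leq 3$ in the cases that can occur; moreover, up to the coprime direct factorisation $G=U\times V$ that appears there, the admissible shapes force $G$ to be abelian or to possess an abelian normal subgroup of index $3$ once $|G|$ is odd, since the non-abelian possibilities for the $V$-part either have even order or already contain an abelian subgroup of index $3$. Extracting these two conclusions from the cited classification is essentially the whole content of the proof.

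The only genuine work is bookkeeping against the literature: the $\CI$-group classification in \cite{Li}/\cite{DMS} is stated in a fairly packaged form, so one must check carefully that its odd-order specialisation yields exactly the Sylow conditions and the abelian/index-$3$ dichotomy claimed here, with no case overlooked. Beyond that, there is no new obstacle — with Theorem~\ref{t1} in hand the proposition is an immediate corollary of the existing $\CI$-theory, which is exactly the transfer of properties from $\CI$-groups to $\BCI$-groups advertised at the start of this section.
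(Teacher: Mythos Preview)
Your proposal is correct and matches the paper's approach exactly: the paper's proof is the single line ``This is an immediate consequence of Theorem~\ref{t1} and \cite[Theorem 8.1]{Li},'' which is precisely your plan of invoking Theorem~\ref{t1} to pass to $\CI$-groups and then quoting Li's odd-order structure theorem. The only refinement is that the paper pinpoints the exact citation (\cite[Theorem~8.1]{Li}) rather than the general classification, so no additional bookkeeping against \cite{DMS} is needed here.
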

\begin{proof} This is an immediate consequence of Theorem \ref{t1} and \cite[Theorem 8.1]{Li}.
\end{proof}

\begin{proposition}
Suppose that $G$ is a finite group with the 5 or 6-$\BCI$ property. Then a Sylow 2-subgroup of $G$ is elementary
Abelian, cyclic, or generalized quaternion.
\end{proposition}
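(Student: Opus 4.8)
The plan is to follow verbatim the pattern of the preceding proposition: convert the $\BCI$-theoretic hypothesis into a $\CI$-theoretic one by means of Corollary \ref{p1}, and then invoke the known classification of the Sylow $2$-subgroups of $m$-$\CI$-groups for small $m$. If $|G|\le 6$ the Sylow $2$-subgroup is already cyclic or elementary Abelian and there is nothing to prove, so we may assume $|G|>6$ and apply Corollary \ref{p1} freely. Then the $5$-$\BCI$ property of $G$ implies the $4$-$\CI$ property, while the $6$-$\BCI$ property implies the $5$-$\CI$ property; since the $6$-$\BCI$ property also implies the $5$-$\BCI$ property, in either case $G$ is at least a $4$-$\CI$-group.

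The remaining ingredient is purely group-theoretic: the known classification of the Sylow $2$-subgroups of a finite group satisfying the $4$-$\CI$ property (\emph{a fortiori} the $5$-$\CI$ property), due to Li and Conder--Li (see \cite{Li}), which asserts precisely that such a Sylow $2$-subgroup is elementary Abelian, cyclic, or generalized quaternion. Feeding in the reduction of the previous paragraph yields the statement. As with the proposition above, the final write-up is then a one-line deduction: ``This is an immediate consequence of Corollary \ref{p1} and \cite{Li}.''

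The only genuine obstacle is bookkeeping at the interface with the literature. First, several of the classical statements on the $m$-$\CI$/$m$-$\DCI$ properties are recorded for Cayley \emph{digraphs}, and one must check that the version being cited concerns the undirected $m$-$\CI$ property — which is exactly what Corollary \ref{p1} supplies — rather than the directed $m$-$\DCI$ property (note, however, that an undirected Cayley graph of valency $m$ is in particular a Cayley digraph of out-valency $m$, so an $m$-$\DCI$ statement would serve as well). Second, one must confirm for which $m\in\{4,5\}$ the literature records exactly the trichotomy ``elementary Abelian / cyclic / generalized quaternion'': this is what decides whether the $5$-$\BCI$ hypothesis already suffices or whether the $6$-$\BCI$ hypothesis is genuinely needed, and hence accounts for the ``$5$ or $6$'' in the statement. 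Once the correct reference and value of $m$ are pinned down, no further argument is required.
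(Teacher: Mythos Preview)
Your proposal is correct and mirrors the paper's own proof almost exactly: the paper writes ``By Corollary~\ref{p1}, $G$ is a group with $4$ or $5$-$\CI$ property. So by \cite[Lemma 3.1]{CLi}, the statement is true.'' Your additional remarks on the $|G|\le 6$ edge case and the $\CI$/$\DCI$ bookkeeping are reasonable caution but unnecessary for the argument, and the precise citation is \cite[Lemma 3.1]{CLi} rather than the survey \cite{Li}.
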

\begin{proof} By Corollary \ref{p1}, $G$ is a group with 4 or 5-$\CI$ property. So by \cite[Lemma 3.1]{CLi}, the statement is true.
\end{proof}

\begin{proposition}
Suppose that $G=\mathbb{Z}^{n}_p$, $p>2$ a
prime number, with $n\geq 2p + 3$. Then $G$ is not a
$\BCI$-group.
\end{proposition}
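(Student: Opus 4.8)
The plan is to obtain this as a formal consequence of Theorem~\ref{t1} together with known non-$\CI$ examples for elementary abelian groups. Theorem~\ref{t1} asserts that every finite $\BCI$-group is a $\CI$-group, so its contrapositive reduces the proposition to the assertion that $\mathbb{Z}_p^n$ is \emph{not} a $\CI$-group whenever $p>2$ and $n\ge 2p+3$.

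First I would pass from an arbitrary $n\ge 2p+3$ down to the single value $n=2p+3$. The group $\mathbb{Z}_p^{2p+3}$ embeds in $\mathbb{Z}_p^n$ as a subgroup (the span of $2p+3$ of the coordinate vectors), and a subgroup of a finite $\CI$-group is again a $\CI$-group (Babai--Frankl). In the elementary abelian setting this reduction is completely transparent: writing $\Cay(\mathbb{Z}_p^n,S)$ as a disjoint union of $p^{\,n-\dim\langle S\rangle}$ copies of the connected graph $\Cay(\langle S\rangle,S)$, an isomorphism $\Cay(\mathbb{Z}_p^{2p+3},S)\cong\Cay(\mathbb{Z}_p^{2p+3},T)$ forces $\dim\langle S\rangle=\dim\langle T\rangle$ and $\Cay(\langle S\rangle,S)\cong\Cay(\langle T\rangle,T)$, hence $\Cay(\mathbb{Z}_p^{n},S)\cong\Cay(\mathbb{Z}_p^{n},T)$; if $\mathbb{Z}_p^n$ were a $\CI$-group we would get $\sigma\in\Aut(\mathbb{Z}_p^n)$ with $S^\sigma=T$, and then extending the linear isomorphism $\sigma|_{\langle S\rangle}\colon\langle S\rangle\to\langle T\rangle$ to all of $\mathbb{Z}_p^{2p+3}$ (extend a basis of $\langle S\rangle$, and a basis of its image, to the whole space) produces an automorphism of $\mathbb{Z}_p^{2p+3}$ carrying $S$ to $T$. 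So it suffices to show that $\mathbb{Z}_p^{2p+3}$ is not a $\CI$-group.

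That last statement is exactly a theorem of Somlai: for every odd prime $p$, the rank-$(2p+3)$ elementary abelian group $\mathbb{Z}_p^{2p+3}$ is not a $\CI$-group, the obstruction being an explicit pair of connection sets that are inequivalent under $\Aut(\mathbb{Z}_p^{2p+3})$ yet yield isomorphic Cayley graphs. (This refines Nowitz's example for $p=2$ and the earlier higher-rank bounds of Muzychuk, Spiga and others.) Combining this with the two reductions above proves the proposition.

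The genuine difficulty is entirely internal to Somlai's theorem, whose proof is a delicate analysis of orbits of ${\rm GL}(2p+3,p)$ on subsets of $\mathbb{Z}_p^{2p+3}$; I would not reproduce it, so the ``hard step'' here is discharged by citation. The only points requiring care are that Somlai's result is invoked at rank exactly $2p+3$ for every odd prime $p$, and that the subgroup-heredity reduction is set up correctly; no case analysis in $p$ or $n$ is needed beyond what is sketched above.
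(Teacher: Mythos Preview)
Your proposal is correct and follows the same route as the paper: the proof there is the one-liner ``It follows from Theorem~\ref{t1} and \cite[Theorem 1]{So}.'' Your explicit subgroup-heredity reduction from $n\ge 2p+3$ down to $n=2p+3$ simply spells out what the paper leaves implicit in the citation.
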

\begin{proof} It follows from Theorem \ref{t1} and \cite[Theorem 1]{So}.
\end{proof}
\begin{definition}\cite[Definition 5.]{D}\label{def:1} Let $M$ be an Abelian group such that every
Sylow $p$-subgroup of $M$ is elementary abelian. Denote the largest order of any element
of $M$ by $\exp(M).$ Let $n\in \{2, 3, 4, 8\}$ be relatively prime to $|M|.$ Set
$E(n, M)=\mathbb{Z}_n \ltimes_{\phi} M,$ where if $n$ is even
then $\phi(g) = g^{-1}$, while if $n = 3$ then $\phi(g) = g^{\ell}$, where $\ell$ is an integer satisfying
$\ell^3\equiv 1 (\mod \exp(M))$ and $(\ell(\ell-1), \exp(M))=1$.
If $M = \mathbb{Z}_p$, and 3 divides $(p-1)$ then $E(3, \mathbb{Z}_p)$ is the nonabelian group of order $3p$, which we
denote by $F_{3p}$ (as this group is a Frobenious group). Similarly, $E(2, \mathbb{Z}_n)$ is the dihedral
group of order $2n$.
\end{definition}
The following corollary is an imidiate consequence of Definition \ref{def:1}, \cite[Theorem 6.]{D} and Theorem \ref{t1}.




\begin{corollary} Let $G$ be a finite $\BCI$-group.
\begin{itemize}
\item[(a)] If there is not any elements of order 8 or 9 in $G$, then $G=H_1\times H_2\times H_3$,
where the orders of $H_1$, $H_2$, and $H_3$ are pairwise coprime, and
\begin{itemize}
\item[$(i)$] $H_1$ is an Abelian group, and each Sylow $p$-subgroup of $H_1$
is isomorphic to $\mathbb{Z}_p^k$ for $k<2p+3$ or $\mathbb{Z}_4$;
\item[$(ii)$] $H_2$ is isomorphic to one of the groups $E(2, M)$, $E(M, 4)$, $Q_8$, or 1;
\item[$(iii)$] $H_3$ is isomorphic to one of the groups $E(3, M)$, $A_4$, or 1.
\end{itemize}

\item[(b)] If $G$ has elements of order 8, then $G \cong E(8, M)$ or $\mathbb{Z}_8$.
\item[(c)] If $G$ contains elements of order 9, then $G$ is one of the
groups $\mathbb{Z}_2 \ltimes \mathbb{Z}_9$, $\mathbb{Z}_4 \ltimes \mathbb{Z}_9$, $\mathbb{Z}_9\ltimes \mathbb{Z}_2^2$, or $\mathbb{Z}_2^n\times\mathbb{Z}_9$ with $n\leq 5$.
\end{itemize}
\end{corollary}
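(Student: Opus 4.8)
The plan is to obtain this corollary as a straightforward transcription of the structural classification of finite $\CI$-groups into the language of $\BCI$-groups. By Theorem~\ref{t1}, every finite $\BCI$-group $G$ is a $\CI$-group, so $G$ may be fed directly into \cite[Theorem~6.]{D}, which already organises the finite $\CI$-groups precisely according to whether they contain an element of order $8$, an element of order $9$, or neither. These three situations are mutually exclusive: a group possessing both an element of order $8$ and an element of order $9$ would contain (by the pairwise-coprime primary decomposition) an element of order $72$, and no group in the $\CI$-list admits one. Recalling the meaning of $E(n,M)$ from Definition~\ref{def:1}, I would then simply match the three alternatives of \cite[Theorem~6.]{D} to items (a), (b), (c): in the ``no element of order $8$ or $9$'' case one reads off the coprime triple decomposition $G=H_1\times H_2\times H_3$ together with (i)--(iii); the ``element of order $8$'' case yields $E(8,M)$ or $\mathbb{Z}_8$; and the ``element of order $9$'' case yields the explicit short list $\mathbb{Z}_2\ltimes\mathbb{Z}_9$, $\mathbb{Z}_4\ltimes\mathbb{Z}_9$, $\mathbb{Z}_9\ltimes\mathbb{Z}_2^2$, $\mathbb{Z}_2^n\times\mathbb{Z}_9$ with $n\le 5$.

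The only point requiring an argument beyond citation is the bound $k<2p+3$ in item (i). If this bound is already built into \cite[Theorem~6.]{D}, there is nothing further to do and the corollary is immediate. Otherwise one sharpens the $\CI$-classification using the $\BCI$-specific input established above, namely that $\mathbb{Z}_p^n$ is not a $\BCI$-group when $p$ is odd and $n\ge 2p+3$ (equivalently \cite[Theorem~1]{So} via Theorem~\ref{t1}): in case (a) the abelian factor $H_1$ has each of its Sylow $p$-subgroups $P$ as a direct factor, hence as a direct factor of $G$, so $P$ is itself a $\BCI$-group; since the $\CI$-classification already forces $P\cong\mathbb{Z}_p^k$ for $p$ odd (resp. $\mathbb{Z}_2^k$ or $\mathbb{Z}_4$ for $p=2$), the rank $k\ge 2p+3$ is excluded, giving exactly the stated constraint. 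This step relies on the hereditary behaviour of the $\BCI$ property under passing to direct factors, which is the one ingredient I would want to pin down carefully (and which, for $p=2$, is in any case subsumed by the $\CI$-list).

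Thus the main obstacle is not mathematical depth but faithfulness of the translation and bookkeeping: one must verify that the parametrising data carried by \cite[Theorem~6.]{D}---the pairwise coprimality of $|H_1|,|H_2|,|H_3|$, the admissible class of abelian groups $M$ (those with elementary abelian Sylow subgroups, as in Definition~\ref{def:1}), the conditions $\ell^3\equiv 1\ (\mathrm{mod}\ \exp(M))$ and $(\ell(\ell-1),\exp(M))=1$ defining $E(3,M)$, and the exhaustiveness of the lists in (ii), (iii) and in parts (b), (c)---matches the corollary's formulation line for line, and that nothing is lost, beyond the permitted sharpening of the rank bound, when one specialises the conclusion from ``$\CI$-group'' to ``$\BCI$-group''. (I would also silently read the ``$E(M,4)$'' appearing in (ii) as $E(4,M)$, to stay consistent with Definition~\ref{def:1}.)
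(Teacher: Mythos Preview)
Your approach is correct and matches the paper's exactly: the paper states this corollary as an immediate consequence of Definition~\ref{def:1}, \cite[Theorem~6.]{D}, and Theorem~\ref{t1}, with no further argument given. Your additional caution about the rank bound $k<2p+3$ is unnecessary, since that bound is already part of the $\CI$-classification recorded in \cite{D} (via \cite{So}), so the first branch of your conditional applies and the corollary follows by direct citation.
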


By considering Theorem \ref{t1}, one can check that which properties of $\CI$-groups arise in $\BCI$-groups.
For example, by \cite[Lemma 8.2]{Li} we know that if $G$ is a $\CI$-group,
then every subgroup of $G$ is a $\CI$-group. The following question therefore arises.

\begin{question}\label{Q1}
Which properties of $\CI$-groups arise in $\BCI$-groups?
\end{question}
It is proved that the subgroup of a $\CI$-group is also a $\CI$-group. Here we pose the following conjecture.
\begin{conjecture}\label{conj:1}
Let $G$ be a $\BCI$-group and $H$ be a subgroup of $G$. Then $H$ is a $\BCI$-group.
\end{conjecture}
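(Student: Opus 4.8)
The plan is to lift the isomorphism problem from $H$ up to $G$. Suppose $\BCay(H,S)\cong\BCay(H,T)$ with $S,T\subseteq H$; we may assume $S,T\neq\emptyset$ (otherwise $S=T=\emptyset$ and there is nothing to prove). Since $S$ and $T$ lie in $H$, for a right transversal $\{g_1,\dots,g_n\}$ of $H$ in $G$ the sets $Hg_i\times\{1,2\}$ partition the vertex set of $\BCay(G,S)$, there are no edges between distinct parts, and the subgraph induced on each part is isomorphic to $\BCay(H,S)$; hence $\BCay(G,S)\cong n\cdot\BCay(H,S)$, and likewise $\BCay(G,T)\cong n\cdot\BCay(H,T)$, so $\BCay(G,S)\cong\BCay(G,T)$. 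As $G$ is a $\BCI$-group, there exist $g\in G$ and $\sigma\in\Aut(G)$ with $T=gS^{\sigma}$.

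Next I would normalise the connection sets so that they generate their supports. Fix $s_0\in S$, put $h:=g\sigma(s_0)$ (so $h\in gS^{\sigma}=T\subseteq H$), and replace $S$, $T$ by $S^{*}:=s_0^{-1}S$ and $T^{*}:=h^{-1}T$; a left translation of a connection set yields an isomorphic bi-Cayley graph, so $\BCay(H,S)\cong\BCay(H,S^{*})$ and $\BCay(H,T)\cong\BCay(H,T^{*})$, while $1\in S^{*}\cap T^{*}$ and $T^{*}=(S^{*})^{\sigma}$. Set $K:=\langle S^{-1}S\rangle=\langle S^{*}\rangle\le H$ and $L:=\langle T^{*}\rangle\le H$; then $\sigma(K)=L$ and $|K|=|L|$. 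The reduction target is to find $\tau\in\Aut(H)$ with $(S^{*})^{\tau}=T^{*}$, since then $T=hT^{*}=h\tau(s_0)^{-1}S^{\tau}$ with $h\tau(s_0)^{-1}\in H$, showing $\BCay(H,S)$ is a $\BCI$-graph. When $\BCay(H,S)$ is connected, i.e. $K=H$, this is immediate: $\sigma(H)=\sigma(K)=L\le H$ forces $\sigma(H)=H$, so $\tau:=\sigma|_{H}$ works. I would therefore first record that every connected bi-Cayley graph of $H$ is a $\BCI$-graph, together with the observation that a disconnected $\BCay(H,S)$ is a disjoint union of $[H:K]$ copies of the connected bi-Cayley graph $\BCay(K,S^{*})$ of the proper subgroup $K$.

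The genuinely hard case is $K<H$, where $\sigma$ supplies an isomorphism $\sigma|_{K}\colon K\to L$ between subgroups of $H$ but need not normalise $H$, so it cannot simply be restricted. The crux is the purely group-theoretic assertion that, for a $\BCI$-group $G$, every isomorphism between subgroups of a subgroup $H\le G$ which is the restriction of an element of $\Aut(G)$ is also the restriction of an element of $\Aut(H)$; granting a $\tau\in\Aut(H)$ with $\tau|_{K}=\sigma|_{K}$ (indeed only $\tau(S^{*})=\sigma(S^{*})$ is needed) closes the argument as above, with no induction required. I expect this extension statement to be the main obstacle, since it is false for arbitrary groups — for instance the two subgroups of order $2$ of $\mathbb{Z}_{4}\times\mathbb{Z}_{2}$ that avoid the Frattini subgroup are not interchanged by any automorphism, so an isomorphism between them realised inside a larger overgroup need not descend. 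The way forward is to exploit the severe structural restrictions on $\BCI$-groups: by Theorem \ref{t1} a $\BCI$-group is a $\CI$-group, and the classification of $\CI$-groups from \cite{Li,DMS} summarised in the Corollary above pins down $G$, $H$, their subgroup lattices and their automorphism groups tightly enough that the extension statement can be verified family by family. Carrying out that case analysis cleanly, and disposing of the handful of $\CI$-groups not yet known to be $\BCI$-groups, is where the real work lies.
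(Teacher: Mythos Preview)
The statement you are trying to prove is posed in the paper as an open \emph{conjecture}; the paper does not prove it. The only partial result offered is Lemma~\ref{char-sub}, which handles the special case where $H$ is a \emph{characteristic} subgroup of $G$: then any $\sigma\in\Aut(G)$ automatically restricts to $\Aut(H)$, and $g\in H$ is immediate from $T,S^{\sigma}\subseteq H$. There is no attempt in the paper at the general case.

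Your proposal actually goes further than the paper in one direction. The lifting $\BCay(G,S)\cong [G:H]\cdot\BCay(H,S)$, the normalisation to $1\in S^{*}\cap T^{*}$ with $T^{*}=(S^{*})^{\sigma}$, and the observation that $\sigma(\langle S^{*}\rangle)=\langle T^{*}\rangle\le H$ are all correct; in the connected case $\langle S^{*}\rangle=H$ this forces $\sigma(H)=H$ by cardinality, so $\sigma|_{H}\in\Aut(H)$ and the argument closes. That genuinely shows every \emph{connected} bi-Cayley graph over an arbitrary subgroup $H$ of a $\BCI$-group is a $\BCI$-graph, a partial result the paper does not contain.

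The gap is exactly where you locate it, but your proposed remedy is not viable. You suggest handling the disconnected case by a case analysis over the structure of $\BCI$-groups, appealing to the Corollary drawn from the $\CI$-group classification. That Corollary gives only \emph{necessary} conditions; it is not a classification of $\BCI$-groups. Which groups on that list are actually $\BCI$-groups is itself open --- the present paper settles only cyclic $p$-groups, $\mathbb{Z}_{2p}$, and the dihedral family. So ``disposing of the handful of $\CI$-groups not yet known to be $\BCI$-groups'' is not a mop-up step but an open problem at least as hard as the conjecture you are trying to prove. Your extension statement (that every $\Aut(G)$-restriction $K\to L$ between subgroups of $H$ lifts to some $\tau\in\Aut(H)$) is the right formulation of the obstruction, but nothing in the paper or in your outline supplies it, and the conjecture remains open.
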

In the following lemma we give a partial answer to the Conjecture \ref{conj:1}.
\begin{lemma}\label{char-sub}
Let $G$ be a finite $BCI$-group and $H$ be a characteristic subgroup
of $G$. Then $H$ is also a $BCI$-group.
\end{lemma}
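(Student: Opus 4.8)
The plan is to take an arbitrary bi-Cayley graph over $H$ and \emph{induce it up} to a bi-Cayley graph over $G$, exploit the $\BCI$-property of $G$ there, and then \emph{push the conclusion back down} to $H$, using that $H$ is characteristic to guarantee the relevant automorphism of $G$ restricts to $H$. Concretely, let $S,T\subseteq H$ with $\BCay(H,S)\cong\BCay(H,T)$. Write $G$ as a disjoint union of right cosets $G=\bigcup_{i} H x_i$ with $x_1=1$. Then $\BCay(G,S)$ is a disjoint union of copies of $\BCay(H,S)$, one for each coset (since $s\in S\subseteq H$ means $(g,1)\sim(sg,2)$ forces $g$ and $sg$ to lie in the same right coset of $H$), and likewise for $\BCay(G,T)$. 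Hence $\BCay(H,S)\cong\BCay(H,T)$ gives, componentwise, $\BCay(G,S)\cong\BCay(G,T)$.

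Since $G$ is a $\BCI$-group, there exist $g\in G$ and $\alpha\in\Aut(G)$ with $T=gS^{\alpha}$. The first thing to extract is that $g$ and $\alpha$ are in fact ``internal'' to $H$. Because $S,T\subseteq H$, the equation $T=gS^{\alpha}$ reads $H\supseteq T = gS^{\alpha}$; picking any $s\in S$ (the case $S=\varnothing$ is trivial, as then $T=\varnothing$) we get $gs^{\alpha}\in H$, and since $H$ is characteristic $s^{\alpha}\in H$, so $g\in H$. Moreover, again using that $H$ is characteristic, $\alpha(H)=H$, so the restriction $\beta:=\alpha|_{H}$ is an automorphism of $H$, and $T=g\,S^{\beta}$ with $g\in H$, $\beta\in\Aut(H)$. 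This is exactly the $\BCI$-condition for $\BCay(H,S)\cong\BCay(H,T)$, so $H$ is a $\BCI$-group.

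The one genuine subtlety — the step I expect to be the main obstacle — is verifying carefully that $\BCay(G,S)$ really is the disjoint union of $[G:H]$ copies of $\BCay(H,S)$ when $S\subseteq H$, and, more importantly, ensuring that an abstract graph isomorphism $\BCay(G,S)\cong\BCay(G,T)$ can be produced from $\BCay(H,S)\cong\BCay(H,T)$ rather than the reverse. For the forward direction one just takes the disjoint union of the given isomorphisms on each coset-component, which is unproblematic; the point to be careful about is matching up the bipartition classes $G\times\{1\}$ and $G\times\{2\}$ correctly coset by coset, i.e. that the component of $\BCay(G,S)$ indexed by $Hx_i$ has vertex set $(Hx_i\times\{1\})\cup(Hx_i\times\{2\})$ and is isomorphic to $\BCay(H,S)$ via $(hx_i,\epsilon)\mapsto(h,\epsilon)$. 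Once that bookkeeping is in place the rest is the short coset-and-restriction argument above. (A cleaner alternative, if a suitable statement is available from the cited work of Jin--Liu or Koike et al., is to invoke directly that $\BCay(G,S)\cong\BCay(H,S)\times K_{[G:H]}^{c}$-type decomposition or a ready-made ``connection set inside a subgroup'' lemma; but the disjoint-union observation is elementary enough to prove in one line.)
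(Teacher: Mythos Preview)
Your proof is correct and follows essentially the same approach as the paper: lift the isomorphism $\BCay(H,S)\cong\BCay(H,T)$ to $\BCay(G,S)\cong\BCay(G,T)$, apply the $\BCI$-property of $G$, and then use that $H$ is characteristic to force $\alpha|_H\in\Aut(H)$ and $g\in H$. The only difference is that where you spell out the coset-by-coset disjoint-union decomposition of $\BCay(G,S)$ directly, the paper simply cites \cite[Lemma~3.5]{arezoomand3} for the implication $\BCay(H,S)\cong\BCay(H,T)\Rightarrow\BCay(G,S)\cong\BCay(G,T)$; your anticipated ``cleaner alternative'' is exactly what the paper does.
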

\begin{proof} Let $S,T\subseteq H$ and $\BCay(H,S)\cong\BCay(H,T)$. Then \cite[Lemma 3.5]{arezoomand3} implies that
$\BCay(G,S)\cong\BCay(G,T)$. Since $G$ is a $\BCI$-group, there exist
$\alpha\in\Aut(G)$ and $g\in G$ such that $T=gS^\alpha$. Since $H$
is a characteristic subgroup of $G$, we find that $\alpha|_H\in\Aut(H)$. Since
$S$ and $T$ are subsets of $H$, we conclude that $g\in H$ which
means that $H$ is a $\BCI$-group.
\end{proof}

The following corollary for the direct product of two $\BCI$-groups can obtain from
Lemma \ref{char-sub}.

\begin{corollary}\label{coprime}
Let $G$ and $H$ be two finite groups and $(|G|,|H|)=1$. If $G\times
H$ is a $\BCI$-group then both $G$ and $H$ are $\BCI$-groups. In
particular, every Sylow subgroup of a finite nilpotent $\BCI$-group
is a $\BCI$-group.
\end{corollary}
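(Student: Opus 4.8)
The plan is to deduce Corollary~\ref{coprime} directly from Lemma~\ref{char-sub}, exploiting the fact that a Hall subgroup of a group which is a direct factor under a coprimeness hypothesis is in fact characteristic. Suppose $G\times H$ is a $\BCI$-group with $(|G|,|H|)=1$. Identify $G$ with the subgroup $G\times\{1\}$ of $G\times H$. Since $|G|$ and $|H|$ are coprime, $G\times\{1\}$ is precisely the set of elements of $G\times H$ whose order divides $|G|$ (equivalently, the unique Hall $\pi$-subgroup, where $\pi$ is the set of prime divisors of $|G|$); this description is invariant under any automorphism of $G\times H$, so $G\times\{1\}$ is a characteristic subgroup. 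Applying Lemma~\ref{char-sub}, $G$ is a $\BCI$-group, and by symmetry so is $H$.

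For the ``in particular'' clause, let $G$ be a finite nilpotent $\BCI$-group. Then $G$ is the internal direct product of its Sylow subgroups $P_1\times\cdots\times P_k$, and for each $i$ the orders of $P_i$ and $\prod_{j\neq i}P_j$ are coprime. Thus $G\cong P_i\times\bigl(\prod_{j\neq i}P_j\bigr)$ with the two factors of coprime order, and the first part of the corollary immediately yields that $P_i$ is a $\BCI$-group. Hence every Sylow subgroup of $G$ is a $\BCI$-group.

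The main point to get right is the claim that the direct factor of coprime order is characteristic; this is the only nontrivial ingredient and it is where Lemma~\ref{char-sub} is fed its hypothesis. The reasoning is the standard Hall-subgroup argument: in $G\times H$ with $(|G|,|H|)=1$, an element $(g,h)$ has order $\mathrm{lcm}(|g|,|h|)$, which divides $|G|$ if and only if $h=1$; since every automorphism permutes elements of a given order, the set $\{(g,1):g\in G\}$ is setwise fixed by $\Aut(G\times H)$. Everything else is bookkeeping, so the write-up will be short.
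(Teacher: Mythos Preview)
Your argument is correct and matches the paper's approach exactly: the paper simply asserts that the corollary is obtained from Lemma~\ref{char-sub}, and you have supplied precisely the missing detail that $G\times\{1\}$ is characteristic in $G\times H$ (as the unique Hall $\pi$-subgroup) together with the standard nilpotent decomposition for the second part.
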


\begin{remark}
The converse of Corollary \ref{coprime} is not true, because by
\cite[p. 28]{LPX}, $\mathbb{Z}_{27}$ is not a $\CI$-group and by
\cite[Lemma 3.2]{BF}, every subgroup of a $\CI$-group is a
$\CI$-group. Therefore $\mathbb{Z}_{54}$ is not a $\CI$-group and by
Theorem \ref{t1}, it is not a $\BCI$-group. While $\mathbb{Z}_9$ and
$\mathbb{Z}_6$ are both $\BCI$-groups by Theorem \ref{cyclicp} and
Theorem \ref{z2p}.
\end{remark}


It is proved in \cite{jin2} that the only finite simple non-abelian 3-$\BCI$-group is $A_5$. In the following theorem,
we prove that there is no any simple non-abelian 4-$\BCI$ group.
\begin{theorem}
There is no any non-Abelian 4-$\BCI$ simple group.
\end{theorem}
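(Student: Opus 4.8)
The plan is to combine Theorem~\ref{t1} with the known classification of simple non-abelian $\CI$-groups (equivalently, with the fact, due to Li and collaborators, that the only simple non-abelian groups which can even be $4$-$\CI$ are severely restricted), and then to eliminate the remaining small candidates by a direct argument. First I would invoke Corollary~\ref{p1}: if $G$ is a simple non-abelian $4$-$\BCI$-group, then $G$ has the $3$-$\CI$ property. The classification of simple groups with the $3$-$\CI$ property (as used in \cite{CLi,Li}) forces $G$ to lie in a very short list; in particular, since a $3$-$\CI$-group has all Sylow subgroups cyclic, elementary abelian, or generalized quaternion, and a simple non-abelian group has order divisible by at least three distinct primes, one is pushed toward groups like $A_5$, $\mathrm{PSL}(2,7)$, $A_6$, $\mathrm{PSL}(2,8)$, and a few others of small order with restricted Sylow structure.

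Next I would upgrade from the $\CI$-side to the $\BCI$-side. By the result of Jin and Liu in \cite{jin2}, $A_5$ is the \emph{only} simple non-abelian $3$-$\BCI$-group; since a $4$-$\BCI$-group is in particular a $3$-$\BCI$-group, the only possible simple non-abelian $4$-$\BCI$-group is $A_5$. So the entire theorem reduces to the single claim: $A_5$ is not a $4$-$\BCI$-group. To prove this I would exhibit an explicit pair of subsets $S,T\subseteq A_5$ of size $4$ with $S\neq gT^{\alpha}$ for every $g\in A_5$ and $\alpha\in\Aut(A_5)=S_5$, yet $\BCay(A_5,S)\cong\BCay(A_5,T)$. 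A natural source of such examples is a connected bi-Cayley graph of valency $4$ on $120$ vertices whose full automorphism group is larger than the bi-regular action of $A_5$ would predict; concretely, one looks for $S$ such that $\BCay(A_5,S)$ has an automorphism not coming from the Cayley-type constructions (the "$R$", "$L$", and "$\sigma$" maps in the notation of \cite{j.xu}), giving an extra isomorphism to some $\BCay(A_5,T)$ with $T$ not in the orbit of $S$ under $A_5\rtimes\Aut(A_5)$. Since $|A_5|=60$ and the valency is $4$, the number of $4$-subsets is small enough that the non-equivalence $S\neq gT^{\alpha}$ can be verified by a finite check, and the graph isomorphism can be exhibited directly.

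The main obstacle, and where the real work lies, is the last step: producing the explicit counterexample in $A_5$ and verifying both that the two bi-Cayley graphs are isomorphic and that $S$ and $T$ are genuinely inequivalent under the $\BCI$ equivalence relation. The cleanest route is probably to reuse the structure behind the proof that $A_5$ fails the $4$-$\CI$ property (recall $A_5$ is a $\CI$-group only for valencies up to $3$; see \cite{Li}): a Cayley graph $\Cay(A_5,S)$ of valency $4$ witnessing failure of $4$-$\CI$ should, via \cite[Lemma~4.7]{arezoomand1} or a similar lifting, yield a bi-Cayley counterexample of valency $4$ or $5$ — and then one has to be careful about whether the valency increases to $5$ under the lift, which would only give a statement about $5$-$\BCI$ rather than $4$-$\BCI$. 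To keep the valency at exactly $4$ I would instead work intrinsically with bi-Cayley graphs: take $S$ to be a $4$-subset of $A_5$ for which $\BCay(A_5,S)$ is a known highly symmetric graph (for instance a bipartite double or a graph admitting a vertex-transitive group strictly containing the natural $A_5$-action), read off a second connection set $T$ from a non-standard automorphism, and finish with the finite inequivalence check over the $120$ pairs $(g,\alpha)$. I expect the combinatorial identification of a suitable symmetric $4$-valent bi-Cayley graph on $A_5$ — or equivalently the explicit automorphism producing $T$ — to be the technical heart of the argument; everything upstream is a routine appeal to Corollary~\ref{p1}, \cite{jin2}, and the $\CI$-group classification.
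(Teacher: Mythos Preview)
Your reduction step coincides with the paper's: invoke \cite{jin2} to see that the only simple non-abelian $3$-$\BCI$-group is $A_5$, and since $4$-$\BCI$ implies $3$-$\BCI$, it remains only to show that $A_5$ is not $4$-$\BCI$. (Your preliminary detour through Corollary~\ref{p1} and the $3$-$\CI$ classification is superfluous once you cite \cite{jin2} directly.)

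The gap is in the second half. You correctly identify that the heart of the matter is an explicit $4$-subset witnessing failure of $4$-$\BCI$ in $A_5$, but your proposed search strategies---lifting a $4$-$\CI$ counterexample, or hunting for an unusually symmetric $4$-valent bi-Cayley graph on $120$ vertices---are both more laborious than necessary and, in the first case, based on a doubtful premise (it is not clear that $A_5$ actually fails the $4$-$\CI$ property; indeed $A_5$ is widely believed to be a full $\CI$-group). The paper bypasses all of this with a one-line observation you are missing: for \emph{any} group $G$ and \emph{any} subset $S$, one has $\BCay(G,S)\cong\BCay(G,S^{-1})$ automatically (the map $(g,i)\mapsto(g^{-1},3-i)$ is a graph isomorphism). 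Hence there is no need to search for an isomorphism at all; one only needs a $4$-subset $S$ of $A_5$ such that $S^{-1}\ne gS^{\alpha}$ for every $g\in A_5$ and $\alpha\in\Aut(A_5)$. The paper takes $a=(1\,2\,3)$, $b=(1\,2\,3\,4\,5)$, $S=\{1,a,b,ab\}$, and disposes of the inequivalence $S^{-1}\ne gS^\alpha$ by a short four-case check on $g\in S^{-1}$, using only that $\alpha$ preserves element orders and is a homomorphism. This is far shorter than a search over all $(g,\alpha)$ or a computation of the full automorphism group of the graph.
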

\begin{proof} Let $G$ be a finite non-Abelian simple group. It is proved in \cite{jin2}
that $G$ is a 3-$\BCI$-group if and only if
$G\cong A_5$. Since any 4-$\BCI$-group is a 3-$\BCI$ group, it is enough to prove that $A_5$
is not a 4-$\BCI$ group.

Let $G\cong A_5$, and $a=(1~2~3)$ and $b=(1~2~3~4~5).$
Assume that $\BCay(G,S)\cong \BCay(G,S^{-1})$ where $S=\{1,a,b,ab\}.$
We will show that $S^{-1}\neq gS^{\alpha}$ for $g\in G$ and $\alpha \in \Aut(G)$. As
$1\in S$, we conclude that $g\in S^{-1}$. So we have the following cases:

Case I. $g=1.$ Then $S^{-1}=S^{\alpha}$. Therefore, we find that $\{a^{-1},b^{-1},(ab)^{-1}\}=\{a^{\alpha},b^{\alpha},(ab)^{^{\alpha}}\}$. So we have
$a^{\alpha}=a^{-1}$. If $b^{\alpha}=b^{-1}$ and $(ab)^{\alpha}=(ab)^{-1}$, then $b^{-1}a^{-1}=(ab)^{-1}=(ab)^{\alpha}=a^{\alpha}b^{\alpha}=a^{-1}b^{-1}$
a contradiction. Let $b^{\alpha}=(ab)^{-1}$ and $(ab)^{\alpha}=b^{-1}.$ Then $b^{-1}=(ab)^{\alpha}=a^{\alpha}b^{\alpha}=a^{-1}b^{-1}a^{-1},$
which is a contradiction.

Case II. $g=a^{-1}$, then $aS^{-1}=S^{\alpha}$. In this case $\{a,1,ab^{-1},a(ab)^{-1}\}=\{1,a^{\alpha},b^{\alpha},(ab)^{^{\alpha}}\}$ and therefore
$a^{\alpha}=a$. If $b^{\alpha}=ab^{-1}$ and $(ab)^{\alpha}=a(ab)^{-1}$, then
$ab^{-1}a^{-1}=(ab)^{\alpha}=a^{\alpha}b^{\alpha}=a^{2}b^{-1}.$ So $ab^{-1}=b^{-1}a^{-1}$ a contradiction. Let $b=a(ab)^{-1}$
and $(ab)^{\alpha}=ab^{-1}$. Hence $ab^{-1}=(ab)^{\alpha}=a^{\alpha}b^{\alpha}=aab^{-1}a^{-1}$ which implies that
$b^{-1}=ab^{-1}a^{-1}$ a contradiction.

Case III. $g=b^{-1}$, then we have $bS^{-1}=S^{\alpha}$. Hence
$\{b,ba^{-1},1,a^{-1}\}=\{1,a^{\alpha},b^{\alpha},(ab)^{^{\alpha}}\}$. So
$a^{\alpha}=a^{-1}$. Now if $b^{\alpha}=b$ and $(ab)^{\alpha}=ba^{-1},$ then we have
$ba^{-1}=a^{\alpha}b^{\alpha}=a^{-1}b$. It is a contradiction. If
$b^{\alpha}=ba^{-1}$ and $(ab)^{\alpha}=b$ we have
$b=a^{\alpha}b^{\alpha}=a^{-1}ba^{-1}$ which is another contradiction.

Case IV. $g=(ab)^{-1}$, then we have $abS^{-1}=S^{\alpha}.$ In this case we find that $\{ab,aba^{-1},a,1 \}=\{1,a^{\alpha},b^{\alpha},(ab)^{\alpha}\}$.
Hence $a^{\alpha}=a$. Assume that $b^{\alpha}=ab$ and $(ab)^{\alpha}=aba^{-1}$. On the other hand $a^{\alpha}b^{\alpha}=a^{2}b.$
Hence $ab=ba^{-1}$ a contradiction. In case $b^{\alpha}=aba^{-1}$
and $(ab)^{\alpha}=ab$ we have
$a^{\alpha}b^{\alpha}=a^{2}ba^{-1}$ which implies $aba^{-1}=b$ a contradiction.

Hence $A_5$ is not a 4-$\BCI$-group and the proof is complete.
\end{proof}


\section{$\BCI$-groups of order $p^k$ and $2p$}
It is well-know that $\mathbb{Z}_p$, $p$ a prime, is a $\CI$-group. Also it is proved in
\cite[Corollary 4.9]{arezoomand1} that it is a $\BCI$-group. This motivates to study finite groups which are both $\BCI$ and $\CI$-group.
Let $\mathcal{BC}$ denotes the class of finite groups $G$ which are both $\BCI$ and $\CI$-groups.
Answering to this question that which groups are in $\mathcal{BC}$?
For a prime number $p$, and a positive integer $k$, we will classify finite cyclic
$\BCI$-group of order $p^k$ and $\BCI$-group of order $2p$.

\begin{theorem}\label{cyclic ci}({\cite{M1,M2}Muzychuk}) A cyclic group of order $n$ is a $\CI$-group if
and only if either $n\in \{8,9,18\}$ or $n=k, 2k$ or $4k$ where $k$
is odd square-free.
\end{theorem}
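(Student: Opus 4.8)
The final statement to be proved is Muzychuk's theorem characterizing cyclic CI-groups, namely that a cyclic group of order $n$ is a CI-group if and only if $n\in\{8,9,18\}$ or $n\in\{k,2k,4k\}$ with $k$ odd square-free. Since this is a citation of a deep theorem of Muzychuk, I will not attempt to reprove it from scratch; instead I sketch the structure of the argument one would give, and isolate where the difficulty lies.

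The plan is to split into the "easy" (necessity) direction and the "hard" (sufficiency) direction. For necessity, one argues that if $n$ has a forbidden divisor then $\mathbb{Z}_n$ fails the CI property. First I would recall the classical counterexamples: Elspas and Turner showed $\mathbb{Z}_8$-type obstructions, and more relevantly one shows that $\mathbb{Z}_{p^2}$ is not a CI-group for any prime $p\ge 3$ (so $9$ is an exception that must be singled out), $\mathbb{Z}_{p^3}$ is never a CI-group, and $\mathbb{Z}_{8p}$, $\mathbb{Z}_{16}$, $\mathbb{Z}_{4p^2}$ etc. all fail. Concretely, one uses the fact (provable by an explicit multiplier/Schur-ring construction) that a CI-group cannot contain $\mathbb{Z}_{p}\times\mathbb{Z}_p$ for $p$ odd, cannot contain $\mathbb{Z}_8\times\mathbb{Z}_2$ or $\mathbb{Z}_{16}$, and that the only prime powers dividing $n$ with exponent $\ge 2$ that survive are $2^2,2^3,3^2$, with the mixed case $2\cdot 3^2=18$ being the last admissible one. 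Combining these divisibility restrictions yields exactly the list $\{8,9,18\}\cup\{k,2k,4k:k\text{ odd square-free}\}$.

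For sufficiency one must show every group on the list genuinely is a CI-group. The main tool is Muzychuk's Schur-ring (S-ring) method: given $\mathrm{Cay}(G,S)\cong\mathrm{Cay}(G,T)$ with $G=\mathbb{Z}_n$, one passes to the transitivity module / the S-ring generated by $S$, and the isomorphism is realized by an element of the group algebra; the problem reduces to showing that two "Cayley-isomorphic" S-rings over $\mathbb{Z}_n$ differ by a multiplier $\sigma\in\mathrm{Aut}(\mathbb{Z}_n)$. One does this by induction on the number of prime divisors, using the tensor/wreath decomposition of S-rings over cyclic groups (Leung–Man's classification of S-rings over cyclic groups is the backbone here): for $n=k$ odd square-free the S-ring is a "generalized wreath/tensor product" of S-rings over $\mathbb{Z}_p$'s, each of which is handled by the prime case, and one patches the multipliers together using coprimality and the Chinese Remainder Theorem; the factors $2$ and $4$ are absorbed because $\mathbb{Z}_2$ and $\mathbb{Z}_4$ are CI and interact well with odd square-free parts; the sporadic cases $8,9,18$ are checked by hand (finite computation, or by the known fact that $\mathbb{Z}_9$ and $\mathbb{Z}_8$ are CI, established earlier by Elspas–Turner and others).

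The hard part is exactly the sufficiency direction for the square-free case: controlling how the multipliers on the prime factors glue to a single automorphism of $\mathbb{Z}_n$, which requires the full structure theory of S-rings over cyclic groups and is the technical heart of Muzychuk's two papers \cite{M1,M2}. I would therefore present the statement as cited, and in a self-contained treatment I would at most reproduce the necessity direction in detail and quote the S-ring machinery for sufficiency. For the purposes of this paper, where Theorem \ref{cyclic ci} is used only as an input to classify cyclic $\BCI$-$p$-groups via Theorem \ref{t1}, the citation suffices and no new proof is needed.
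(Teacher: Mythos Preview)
The paper does not prove this theorem at all; it is stated purely as a citation of Muzychuk's results \cite{M1,M2} and used as a black box. Your proposal correctly identifies this and concludes that a citation suffices, which matches the paper exactly. (As a minor aside, your informal sketch contains a slip: you write that $\mathbb{Z}_{p^2}$ is not a $\CI$-group for any prime $p\ge 3$, but $\mathbb{Z}_9$ \emph{is} a $\CI$-group---you presumably meant $p\ge 5$; also the remarks about $\mathbb{Z}_p\times\mathbb{Z}_p$ and $\mathbb{Z}_8\times\mathbb{Z}_2$ are moot since cyclic groups contain no such subgroups.)
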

In the following theorem, we classify finite cyclic
$\BCI$-$p$-groups:

\begin{theorem}\label{cyclicp}
A finite cyclic $p$-group $G$ is $\BCI$-group if and only if $G$
isomorphic to one of the groups $\mathbb{Z}_2,\mathbb{Z}_4,\mathbb{Z}_3,\mathbb{Z}_9,\mathbb{Z}_p,$ where $p\geq 5$.
\end{theorem}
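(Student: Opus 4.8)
The plan is to prove the two directions separately, leaning heavily on the $\CI$/$\BCI$ dictionary established earlier in the paper. For the ``if'' direction, I would note that $\mathbb{Z}_2,\mathbb{Z}_3,\mathbb{Z}_p$ (for $p\geq 5$ prime) are already known to be $\BCI$-groups by \cite[Corollary 4.9]{arezoomand1}, so the only new work is to verify that $\mathbb{Z}_4$ and $\mathbb{Z}_9$ are $\BCI$-groups. One natural route is a direct, finite case-analysis: classify all subsets $S\subseteq \mathbb{Z}_4$ (respectively $\mathbb{Z}_9$) up to the equivalence $S\sim gS^\sigma$, compute the resulting bi-Cayley graphs $\BCay(\mathbb{Z}_4,S)$ and check that non-equivalent $S$ yield non-isomorphic graphs (e.g. by comparing degree, number of $4$-cycles, connectedness, spectrum). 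Since $|\mathbb{Z}_4|=4$ there are only $2^4=16$ subsets and a handful of equivalence classes, and for $\mathbb{Z}_9$ one can cut the work using the valency and the action of $\Aut(\mathbb{Z}_9)$ together with translations, which is a group of order $9\cdot 6=54$ acting on subsets. Alternatively, and more cheaply, if the paper has access to \cite[Theorem 1.1]{koike1} or a similar classification of cyclic $\BCI$-groups of small order, one simply cites it; I would look for such a shortcut first.

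For the ``only if'' direction, the key tool is Theorem~\ref{t1}: every $\BCI$-group is a $\CI$-group, hence every cyclic $\BCI$-$p$-group $G$ is a cyclic $\CI$-group. By Muzychuk's Theorem~\ref{cyclic ci}, a cyclic group of prime-power order $p^k$ that is a $\CI$-group must have $p^k\in\{8,9\}$ or $p^k$ odd square-free, i.e. $p^k\in\{p\} \cup \{8,9\}$. So the only candidates are $\mathbb{Z}_p$ (any prime $p$) together with $\mathbb{Z}_4$, $\mathbb{Z}_8$, $\mathbb{Z}_9$. Wait — $\mathbb{Z}_4$ is not square-free, and $4\notin\{8,9,18\}$, so actually Muzychuk's criterion with $n=4=4\cdot 1$ and $k=1$ odd square-free \emph{does} allow $n=4$; similarly $n=2$ and $n=1$ are covered. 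Thus the $\CI$ restriction already eliminates all cyclic $p$-groups except $\mathbb{Z}_2,\mathbb{Z}_4$ (from $n=2k,4k$ with $k=1$), $\mathbb{Z}_p$ for odd $p$ (from $n=k$), $\mathbb{Z}_9$ and $\mathbb{Z}_8$. It then remains to rule out $\mathbb{Z}_8$: one shows $\mathbb{Z}_8$ is \emph{not} a $\BCI$-group by exhibiting an explicit subset $S\subseteq\mathbb{Z}_8$ with $\BCay(\mathbb{Z}_8,S)\cong\BCay(\mathbb{Z}_8,T)$ but $T\neq gS^\sigma$ for any $g\in\mathbb{Z}_8$, $\sigma\in\Aut(\mathbb{Z}_8)$; this is plausibly already recorded in the literature (Koike--Kov\'acs, \cite{koike1,koike2}), so I would cite it, and otherwise construct the counterexample by hand using the classical Elspas--Turner-style ``twist'' that fails $\CI$ for $\mathbb{Z}_8$, lifted to the bipartite double.

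I expect the main obstacle to be the ``if'' direction for $\mathbb{Z}_9$: verifying that $\mathbb{Z}_9$ actually is a $\BCI$-group, rather than merely a $\CI$-group, is not automatic from Theorem~\ref{t1} (which only goes one way), so one genuinely needs either an existing classification theorem to invoke or a self-contained argument. A clean self-contained argument would use the structure of $\Aut(\BCay(\mathbb{Z}_9,S))$: a bi-Cayley graph $\BCay(G,S)$ of a cyclic group admits the regular bicyclic group $\widehat{G}$ (the ``right translation'' copy of $G$ on each side), and isomorphism of bi-Cayley graphs can be reduced, via a Babai-type argument, to the conjugacy of two such bicyclic subgroups inside the full automorphism group; for $p$-groups with $p=3$ and exponent $9$ the relevant Sylow/normalizer analysis is short. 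Concretely, I would try to mimic Muzychuk's proof that $\mathbb{Z}_9$ is a $\CI$-group (via Schur rings over $\mathbb{Z}_9$) in the bipartite setting, using the dictionary between $\BCay(G,S)$ and the Cayley graph $\Cay(G\times\mathbb{Z}_2, (S\times\{1\})\cup(S^{-1}\times\{1\}))$ when that bipartite-complement trick applies; failing a quick reduction, the brute-force enumeration over the $54$ orbits of subsets of $\mathbb{Z}_9$ is guaranteed to finish. I would present whichever of these is shortest, with preference for citing \cite{koike1} if it contains the statement.
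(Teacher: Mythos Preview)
Your overall strategy matches the paper's: use Theorem~\ref{t1} together with Muzychuk's Theorem~\ref{cyclic ci} for the ``only if'' direction, and cite existing results for the ``if'' direction. Two points of comparison are worth making.

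First, your worry about $\mathbb{Z}_9$ is unnecessary. The paper simply invokes \cite[Corollary~4.9]{arezoomand1} for \emph{both} $\mathbb{Z}_p$ ($p$ prime) and $\mathbb{Z}_9$; that corollary already establishes that $\mathbb{Z}_9$ is a $\BCI$-group, so no brute-force enumeration or Schur-ring argument is needed. For $\mathbb{Z}_4$ the paper does exactly what you propose: the cases $|S|\leq 3$ are handled by the $3$-$\BCI$ results in \cite{jin1} or \cite[Theorem~1.1]{koike2}, and $|S|=4$ is trivial.

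Second, and more interestingly, your treatment of the ``only if'' direction is actually more careful than the paper's. The paper jumps directly from Theorem~\ref{cyclic ci} to the list $\mathbb{Z}_2,\mathbb{Z}_4,\mathbb{Z}_3,\mathbb{Z}_9,\mathbb{Z}_p$, but as you correctly observe, Muzychuk's theorem explicitly includes $n=8$, so the $\CI$ property alone does \emph{not} eliminate $\mathbb{Z}_8$. Your plan to rule out $\mathbb{Z}_8$ separately (by an explicit pair $S,T$ or by citing Koike--Kov\'acs) fills a genuine gap that the paper's proof leaves unaddressed.
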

\begin{proof} Let $G=\langle a\rangle\cong\mathbb{Z}_{p^k}$ for some prime $p$
and positive integer $k$ be a $\BCI$-group. By Theorem \ref{t1},
$G$ is a $\CI$-group. Hence by Theorem \ref{cyclic ci}, $G$ is isomorphic to one
of the groups $\mathbb{Z}_2,\mathbb{Z}_4,\mathbb{Z}_3,\mathbb{Z}_9,\mathbb{Z}_p$, where $p\geq 5$.

To complete the proof it is enough to show that the groups $\mathbb{Z}_2,\mathbb{Z}_4,\mathbb{Z}_3,\mathbb{Z}_9,\mathbb{Z}_p,$ where $p\geq 5$
are $\BCI$-groups. By \cite[Corollary 4.9]{arezoomand1} the groups $\mathbb{Z}_9$ and $\mathbb{Z}_p$, $p$ a
prime, are $\BCI$-group. Let $\Gamma=\BCay(\mathbb{Z}_4, S)$ for some
subset $S$ of $\mathbb{Z}_4$. If $|S|\leq 3,$ then by \cite{jin1} or \cite[Theorem 1.1]{koike2} $\Gamma$ is a
$\BCI$-graph. Hence we may assume that $S=\mathbb{Z}_4$. In this case, obviously $\Gamma$ is a $\BCI$-graph.
This completes the proof.

The following corollary which is an immediate consequence of Lemma \ref{char-sub} gives
us some restriction on finite cyclic $\BCI$-groups.
\end{proof}

\begin{corollary} Let $G\cong\mathbb{Z}_n$ be a $\BCI$-group. Then
$n=2^{i}3^{j}p^{\alpha_1}_1\cdots p^{\alpha_k}_k$, $0\leq
i\leq2$, $0\leq j\leq 2$, $0\leq \alpha_t\leq 1$, for $t=1,\ldots,k$.
\end{corollary}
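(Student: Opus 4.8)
The plan is to derive the statement as a quick corollary of Theorem~\ref{cyclicp} and Lemma~\ref{char-sub}, the bridge being the elementary fact that every subgroup of a cyclic group is characteristic. First I would record this observation precisely: if $G\cong\mathbb{Z}_n$ and $p$ is a prime with $p^{\alpha}$ exactly dividing $n$ (that is, $p^{\alpha}\mid n$ but $p^{\alpha+1}\nmid n$), then the Sylow $p$-subgroup $P$ of $G$ is the \emph{unique} subgroup of $G$ of order $p^{\alpha}$, since a cyclic group has exactly one subgroup of each order dividing its order. Uniqueness forces $P^{\sigma}=P$ for every $\sigma\in\Aut(G)$, so $P\cong\mathbb{Z}_{p^{\alpha}}$ is characteristic in $G$.

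Next, since $G$ is assumed to be a $\BCI$-group, Lemma~\ref{char-sub} applies and shows that $P\cong\mathbb{Z}_{p^{\alpha}}$ is itself a $\BCI$-group. As $\mathbb{Z}_{p^{\alpha}}$ is a finite cyclic $p$-group, Theorem~\ref{cyclicp} then forces $\mathbb{Z}_{p^{\alpha}}$ to be isomorphic to one of $\mathbb{Z}_2,\mathbb{Z}_4,\mathbb{Z}_3,\mathbb{Z}_9$, or $\mathbb{Z}_p$ with $p\geq 5$. Comparing exponents, this means $\alpha\leq 2$ when $p\in\{2,3\}$ and $\alpha\leq 1$ when $p\geq 5$.

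Finally I would assemble these per-prime constraints into the global conclusion: writing the prime factorization as $n=2^{i}3^{j}p_1^{\alpha_1}\cdots p_k^{\alpha_k}$ with $p_1,\dots,p_k$ distinct primes all at least $5$, the previous paragraph yields $0\leq i\leq 2$, $0\leq j\leq 2$, and $0\leq\alpha_t\leq 1$ for each $t$, which is exactly the asserted shape of $n$. There is no genuine obstacle in this argument; the only point that needs (a line of) care is the verification that Sylow subgroups of a cyclic group are characteristic, so that Lemma~\ref{char-sub} is legitimately invoked rather than the weaker, and here insufficient, statement that subgroups of $\CI$-groups are $\CI$-groups.
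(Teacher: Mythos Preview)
Your argument is correct and is exactly the approach the paper has in mind: the paper states the corollary as ``an immediate consequence of Lemma~\ref{char-sub}'' placed directly after Theorem~\ref{cyclicp}, and your write-up simply makes explicit the two steps (Sylow subgroups of a cyclic group are characteristic, hence $\BCI$ by Lemma~\ref{char-sub}, hence constrained by Theorem~\ref{cyclicp}) that the paper leaves implicit.
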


As a consequence of Theorem \ref{cyclicp}, we determine dihedral $\CI$-groups of order $2p^k$,
where $p$ is a prime and $k\geq 1$ is an integer.

\begin{corollary} Let $p\geq 3$ be a prime and $k\geq 1$ be an integer. Then $D_{2p^k}$ is a $\CI$-group if and only if
$p\geq 5$ and $k=1$ or $(p,k)\in\{(2,1),(3,1),(3,2)\}$.
\end{corollary}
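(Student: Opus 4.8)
The plan is to prove the equivalence by treating each implication separately, reducing both to the classification results for cyclic $\CI$- and $\BCI$-groups already quoted above.

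\emph{Necessity.} Suppose $D_{2p^k}$ is a $\CI$-group. Its rotation subgroup is cyclic of order $p^k$, and since every subgroup of a finite $\CI$-group is again a $\CI$-group (\cite[Lemma 8.2]{Li}, \cite[Lemma 3.2]{BF}), $\mathbb{Z}_{p^k}$ is a $\CI$-group. By Muzychuk's Theorem \ref{cyclic ci}, a cyclic group of prime-power order $p^k$ is a $\CI$-group precisely when $p^k$ is prime or $p^k\in\{4,8,9\}$. When $p\ge 3$ this already forces $k=1$ or $(p,k)=(3,2)$; when $p=2$ it leaves $D_4$, $D_8$ or $D_{16}$, and the last two are discarded because a $\CI$-group has the $4$-$\CI$ property, so by \cite[Lemma 3.1]{CLi} its Sylow $2$-subgroup — here the whole group — would have to be elementary abelian, cyclic, or generalized quaternion, contrary to $D_8,D_{16}$. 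This produces exactly the stated list. I note that, in line with the heading ``consequence of Theorem \ref{cyclicp}'', the odd‑prime part of this direction may equally be run through the standard identification of the bipartite Cayley graphs of $D_{2p^k}$ with the bi-Cayley graphs of $\mathbb{Z}_{p^k}$: for $p$ odd the reflections are exactly the involutions of $D_{2p^k}$ and $\Aut(D_{2p^k})$ acts affinely on them, so a $\CI$-group $D_{2p^k}$ forces $\mathbb{Z}_{p^k}$ to be a $\BCI$-group, whence by Theorem \ref{cyclicp} one gets $\mathbb{Z}_{p^k}\in\{\mathbb{Z}_3,\mathbb{Z}_9\}\cup\{\mathbb{Z}_p:p\ge 5\}$.

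\emph{Sufficiency.} It remains to confirm that each group on the list really is a $\CI$-group. The group $D_4\cong\mathbb{Z}_2\times\mathbb{Z}_2$ is elementary abelian of rank $2$, hence a $\CI$-group. For $k=1$ and $p$ an odd prime, $D_{2p}$ is the group $E(2,\mathbb{Z}_p)$ of Definition \ref{def:1} — in particular $D_6\cong S_3$ — and is a $\CI$-group by \cite[Theorem 6.]{D}. Finally $D_{18}\cong\mathbb{Z}_2\ltimes\mathbb{Z}_9$ is a $\CI$-group by the classification of finite $\CI$-groups in \cite{D,Li}.

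The step I expect to be the main obstacle is the sufficiency direction, and specifically establishing that $D_{2p}$ and $D_{18}$ are genuinely $\CI$-groups rather than merely compatible with the necessary conditions of the classification; this forces one to appeal to the sufficiency statements available for these particular dihedral families. A secondary subtlety is the borderline case $p^k\le 2$: there $D_{2p^k}$ is abelian, the reduction to the rotation subgroup is vacuous, and the usual formula for $\Aut(D_{2n})$ does not apply, so $D_4$ has to be dealt with on its own (as above).
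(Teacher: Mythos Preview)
Your necessity argument is correct and runs parallel to the paper's, though via a slightly different route: the paper invokes \cite[Corollary 4.9]{arezoomand2} (the bridge ``$D_{2p^k}$ $\CI$ $\Rightarrow$ $\mathbb{Z}_{p^k}$ $\BCI$'') together with Theorem~\ref{cyclicp}, whereas you pass to the rotation subgroup by the $\CI$-subgroup inheritance property and then apply Muzychuk directly, and dispose of $D_8$ and $D_{16}$ with \cite[Lemma 3.1]{CLi} rather than the bare ``$D_8$ is known not to be $\CI$''. Your alternative remark via $\BCI$-groups is exactly the paper's main line. Both arguments are fine.

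The sufficiency direction, however, has a real gap. You invoke \cite[Theorem 6]{D} to conclude that $D_{2p}=E(2,\mathbb{Z}_p)$ is a $\CI$-group, and ``the classification in \cite{D,Li}'' for $D_{18}$. But those results provide only \emph{necessary} conditions: they assert that every finite $\CI$-group lies on a certain list, not that every group on the list is $\CI$. (Indeed, the paper's own introduction records that the classification of $\CI$-groups is still open, and \cite{D} is explicitly about producing new \emph{non}-$\CI$-groups.) You flag precisely this worry in your final paragraph, but then never supply the sufficiency references that would close it. The paper does: it cites Babai \cite{Babai} for $D_{2p}$ with $p\ge 3$ prime, Godsil \cite{Godsil} for $D_4\cong\mathbb{Z}_2\times\mathbb{Z}_2$, and Dobson--Morris--Spiga \cite{DMS} for $D_{18}$. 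With those three citations in place of your appeals to \cite{D,Li}, your proof would be complete.
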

\begin{proof} Let $D_{2p^k}$ be a $\CI$-group. Then Theorem \ref{cyclicp}
and \cite[Corollary 4.9]{arezoomand2} imply that $p\geq 5$ and $k=1$
or $(p,k)\in\{(2,1),(2,2),(3,1),(3,2)\}$. On the other hand, it is
well-known that $D_8$ is not a $\CI$-group. This proves one direction.

By \cite{Babai} $D_{2p}$, $p\geq 3$ a prime is a $\CI$-group. Also by
\cite{Godsil} $D_4\cong\mathbb{Z}_2\times\mathbb{Z}_2$ is a $\CI$-group.
Furthermore, $D_{18}$ is a $\CI$-group by \cite{DMS}. This completes the proof.
\end{proof}

Before turning to prove that the group $\mathbb{Z}_{2p}$, $p$ an odd prime, is a
$\BCI$-group, we need to prove some lemmas.

\begin{lemma}\label{comp}
The bi-Cayley graph, $\BCay(G,S)$, is a $\BCI$-graph if and only if $\BCay(G, G\setminus S)$
is a $\BCI$-graph.
\end{lemma}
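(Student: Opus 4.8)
The plan is to establish a bijective correspondence between bi-Cayley graphs built from $S$ and those built from its complement $G\setminus S$, and to show that this correspondence respects the notion of Cayley isomorphism. First I would observe that $\BCay(G, G\setminus S)$ is precisely the bipartite complement of $\BCay(G, S)$: the two graphs have the same vertex set $G\times\{1,2\}$, and a pair $\{(g,1),(h,2)\}$ is an edge of one exactly when it is a non-edge of the other, since $hg^{-1}\in S$ or $hg^{-1}\in G\setminus S$ but not both. Consequently, for any subset $T\subseteq G$ we have $\BCay(G,S)\cong\BCay(G,T)$ as graphs (via an isomorphism preserving the bipartition classes $G\times\{1\}$ and $G\times\{2\}$) if and only if $\BCay(G, G\setminus S)\cong\BCay(G, G\setminus T)$ via the same map. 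The mild technical point here is to confirm that isomorphisms of bi-Cayley graphs either preserve or swap the two parts; this is standard (and swapping parts just replaces $S$ by $S^{-1}$, which causes no trouble since the $\BCI$ condition is symmetric under $S\mapsto S^{-1}$).

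Next I would push the argument through the definition of $\BCI$-graph. Suppose $\BCay(G, G\setminus S)$ is a $\BCI$-graph; I want to show $\BCay(G,S)$ is one. Take any $T\subseteq G$ with $\BCay(G,S)\cong\BCay(G,T)$. By the complementation observation, $\BCay(G, G\setminus S)\cong\BCay(G, G\setminus T)$, so by hypothesis there exist $g\in G$ and $\alpha\in\Aut(G)$ with $G\setminus T = g\,(G\setminus S)^{\alpha}$. Now the key computation is that the operation $X\mapsto gX^{\alpha}$ is a bijection of $G$, hence it commutes with complementation in the sense that $g\,(G\setminus S)^{\alpha} = g\,(G^{\alpha}\setminus S^{\alpha}) = gG^{\alpha}\setminus gS^{\alpha} = G\setminus gS^{\alpha}$, using that $\alpha$ is an automorphism (so $G^{\alpha}=G$) and left translation by $g$ is a bijection. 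Therefore $G\setminus T = G\setminus gS^{\alpha}$, which gives $T = gS^{\alpha}$, exactly what is needed. The reverse implication is identical since $G\setminus(G\setminus S) = S$.

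I do not expect a serious obstacle here; the statement is essentially a formal manipulation once the right viewpoint (bipartite complement) is fixed. The only point requiring a little care is the first one: making precise that a graph isomorphism $\BCay(G,S)\to\BCay(G,T)$ can be taken to respect the bipartition, or else to track the effect of swapping parts. For the color-preserving case the complementation argument above is immediate; for an isomorphism that swaps the two parts, composing with the part-swap $(g,1)\leftrightarrow(g,2)$ shows $\BCay(G,S)\cong\BCay(G,T^{-1})$ color-preservingly, and since $T=gS^{\alpha}$ is equivalent to $T^{-1}=\alpha$-image data up to translation (indeed $(gS^{\alpha})^{-1}=(S^{-1})^{\alpha}g^{-1}$ and one absorbs the right translate by passing to a conjugate), the conclusion is unchanged. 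I would phrase the write-up so as to cite whatever basic fact about bi-Cayley isomorphisms is available in the references rather than re-deriving it, and then present the complementation identity $\BCay(G,G\setminus S) = \overline{\BCay(G,S)}^{\,\mathrm{bip}}$ together with the one-line algebra $G\setminus gS^{\alpha}=g(G\setminus S)^{\alpha}$ as the heart of the proof.
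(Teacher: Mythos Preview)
Your approach is correct and more direct than the paper's. The paper does not argue from the definition of $\BCI$-graph; instead it invokes a Babai-type criterion (\cite[Theorem~C]{arezoomand1}) characterising $\BCI$-graphs via conjugacy of copies of $R_G$ inside the automorphism group together with the side condition $S^{-1}=gS^{\alpha}$, and then transfers the conjugating element $\theta$ from $\Aut(\Gamma)$ to $\Aut(\Sigma)$ after checking that $\theta$ respects the bipartition. Your route---observe that $\BCay(G,G\setminus S)$ is the bipartite complement of $\BCay(G,S)$ and apply the identity $g(G\setminus S)^{\alpha}=G\setminus gS^{\alpha}$ directly to the definition---avoids the external criterion entirely. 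Two refinements are worth making. First, your claim that ``isomorphisms of bi-Cayley graphs either preserve or swap the two parts'' is false as stated for disconnected graphs (two disjoint edges already give a counterexample); what you need, and what is true, is that \emph{some} isomorphism does so, which follows because the components of $\BCay(G,S)$ are mutually isomorphic via the bipartition-preserving maps in $R_G$ and each connected bipartite component has a unique bipartition. Second, once you have such a bipartition-respecting $\phi$, it is automatically an isomorphism of the bipartite complements whether it preserves or swaps the parts (a cross non-edge maps to a cross non-edge either way), so your separate treatment of the swap case through $T^{-1}$ and conjugation is unnecessary.
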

\begin{proof} Since $G\setminus(G\setminus S)=S$, it is enough to prove the
direction $``\Rightarrow"$. To this end, suppose that
$\Gamma=\BCay(G,S)$ is a $\BCI$-graph and $\Sigma=\BCay(G,G\setminus
S)$. Let $\varphi\in\Sym(V(\Sigma))$ where
$\{G\times\{1\},G\times\{2\}\}^\varphi=\{G\times\{1\},G\times\{2\}\}$
and $\varphi^{-1}R_G\varphi\leq\Aut(\Sigma)$. By \cite[Theorem
C]{arezoomand1}, it is enough to prove that $R_G$ and
$\varphi^{-1}R_G\varphi$ are conjugate in $\Aut(\Sigma)$ and
$(G\setminus S)^{-1}=g(G\setminus S)^\alpha$ for some $g\in G$ and
$\alpha\in\Aut(G)$.

First, we claim that $\varphi^{-1}R_G\varphi\leq\Aut(\Gamma)$.
Let $\rho_g$ be an arbitrary element of $R_G$ and $x,y\in G$. Since
$\{G\times\{1\},G\times\{2\}\}$ is $\varphi$-invariant,
$(x,1)^{\varphi^{-1}\rho_g\varphi}\in G\times\{1\}$ and $(y,2)^{\varphi^{-1}\rho_g\varphi}\in G\times\{2\}$.
Then
\begin{eqnarray*}
 \{(x,1),(y,2)\}\in E(\Gamma) &\Leftrightarrow& \exists s\in S;~y=sx\\
 &\Leftrightarrow&\{(x,1),(y,2)\}\notin E(\Sigma)\\
& \Leftrightarrow &\{(x,1)^{\varphi^{-1}\rho_g\varphi},(y,2)^{\varphi^{-1}\rho_g\varphi}\}\notin E(\Sigma)\\
 & \Leftrightarrow &\exists s\in S,\exists h\in G~;~(x,1)^{\varphi^{-1}\rho_g\varphi}=(h,1), \\
 &&(y,2)^{\varphi^{-1}\rho_g\varphi}=(sh,2)\\
 & \Leftrightarrow & \{(x,1)^{\varphi^{-1}\rho_g\varphi},(y,2)^{\varphi^{-1}\rho_g\varphi}\}\in E(\Gamma),
 \end{eqnarray*}
 which means that $\varphi^{-1}R_G\varphi\leq\Aut(\Gamma)$. Since $V(\Gamma)=V(\Sigma)$ and $\Gamma$ is
 a $\BCI$-graph, \cite[Theorem C]{arezoomand1}
 implies that $\varphi^{-1}R_G\varphi=\theta^{-1}R_G\theta$ for some $\theta\in\Aut(\Gamma)$ and $S^{-1}=gS^\alpha$ for some
 $g\in G$ and $\alpha\in\Aut(G)$. Then $
 (G\setminus S)^{-1}=G\setminus S^{-1}=G\setminus gS^\alpha=g(G\setminus S^\alpha)=g(G\setminus S)^\alpha$.

Now we claim that
$\{G\times\{1\},G\times\{2\}\}$ is $\theta$-invariant. If $(1,1)^\theta=(x,1)$ for some $x\in G$, then for all $g\in G$ we have
$(g,1)^\theta=(1,1)^{\rho_g\theta}=(x,1)^{\theta^{-1}\rho_g\theta}=(x,1)^{\varphi^{-1}\rho_h\varphi}\in G\times\{1\}$, for some
$h\in G$, which proves our claim in this case. If $(1,1)^\theta=(x,2)$ for some $x\in G$, then for all $g\in G$ we have
$(g,1)^\theta=(1,1)^{\rho_g\theta}=(x,2)^{\theta^{-1}\rho_g\theta}=(x,2)^{\varphi^{-1}\rho_h\varphi}\in G\times\{2\}$,
for some $h\in G$, which completes the proof of our claim.

Finally, $\theta\in\Aut(\Sigma)$. To see this, we have
\begin{eqnarray*}
 \{(x,1),(y,2)\}\in E(\Sigma)&\Leftrightarrow &\exists t\in G\setminus S;~ y=tx\\
 &\Leftrightarrow & \{(x,1),(y,2)\}\notin E(\Gamma)\\
 &\Leftrightarrow & \{(x,1)^\theta,(y,2)^\theta\}\notin E(\Gamma)\\
 &\Leftrightarrow & \{(x,1)^\theta,(y,2)^\theta\}\in E(\Sigma).
\end{eqnarray*}
Hence $\theta\in\Aut(\Sigma)$ and the proof is complete.
\end{proof}

\begin{lemma}\label{prime}
Let $\Gamma=\BCay(G,S)$ be connected and $|S|<p$, where $p$ is a
prime. Then $p$ does not divide the order of any stabilizer of $A$
in $V(\Gamma)$, where $A=\Aut(\Gamma)$.
\end{lemma}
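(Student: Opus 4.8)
The plan is to show that the vertex stabilizer $A_v$ in $A = \Aut(\Gamma)$ has order coprime to $p$ by analyzing the action of $A_v$ on the neighbourhood of $v$ and then propagating this to the whole graph via connectedness. First I would recall the structure of a bi-Cayley graph: the group $R_G$ of right translations acts semiregularly on $V(\Gamma) = G \times \{1,2\}$ with the two orbits $G \times \{1\}$ and $G \times \{2\}$, so $A$ is transitive on each of these two sets (indeed $R_G \le A$ acts transitively on each part), and every vertex has valency $|S|$. Since $\Gamma$ is connected and bipartite with parts of equal size, it suffices by transitivity to bound $|A_v|$ for a single vertex $v = (1,1)$.

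The key step is a local argument at $v$. Let $v = (1,1)$ with neighbourhood $\Gamma(v) = \{(s,2) \mid s \in S\}$, a set of size $|S| < p$. The stabilizer $A_v$ permutes $\Gamma(v)$, giving a homomorphism $A_v \to \Sym(\Gamma(v))$ whose image has order dividing $|S|! < p!$, which is coprime to $p$ since $|S| < p$. Its kernel $A_v^{[1]}$ fixes $v$ and every neighbour of $v$. Now I would run the standard connectedness/induction argument: define $A_v^{[i]}$ to be the pointwise stabilizer of the ball of radius $i$ around $v$; for each vertex $w$ at distance $i$ from $v$, the group $A_v^{[i]}$ acts on $\Gamma(w)$, and since $\Gamma(w)$ has size $|S| < p$ the induced image again has order coprime to $p$, so $|A_v^{[i]} : A_v^{[i+1]}|$ is coprime to $p$. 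Because $\Gamma$ is connected and finite, $A_v^{[d]} = 1$ for $d$ the diameter, hence $|A_v| = |A_v : A_v^{[1]}| \prod_{i \ge 1} |A_v^{[i]} : A_v^{[i+1]}|$ is a product of integers each coprime to $p$, so $p \nmid |A_v|$. By transitivity of $A$ on each part (and hence the conjugacy of all vertex stabilizers within their part), the same holds for every $v \in V(\Gamma)$.

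The main obstacle I expect is being careful about the claim that the image of $A_v^{[i]}$ acting on $\Gamma(w)$ (for $w$ at distance $i$) has order coprime to $p$: one must note that $\Gamma(w)$ splits as (the already-fixed neighbours of $w$ at distance $i-1$ from $v$) together with the as-yet-unconstrained neighbours at distance $i+1$, and $A_v^{[i]}$ fixes the former set pointwise, so it acts on the latter subset of $\Gamma(w)$, still of size at most $|S| < p$; hence the kernel of this action is $A_v^{[i+1]}$ restricted to $w$'s new neighbours, and intersecting over all $w$ at distance $i$ gives precisely $A_v^{[i+1]}$. A minor additional point is that although $\Gamma$ is bipartite its automorphism group need not preserve the bipartition in general, but here we only need a vertex-stabilizer estimate, and the radius-$i$ ball is automatically preserved by $A_v$ regardless; so the argument goes through verbatim. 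Once these bookkeeping details are in place, the conclusion $p \nmid |A_v|$ is immediate.
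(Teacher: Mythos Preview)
Your proposal is correct and follows essentially the same connectedness-propagation idea as the paper: both arguments use that each neighbourhood has fewer than $p$ points to push ``no $p$-part'' outward from $v$ until the whole graph is covered. The only cosmetic difference is packaging: the paper picks a single element $x\in A_{(1,1)}$ of order $p$ and shows directly that each orbit of $\langle x\rangle$ on a neighbourhood must be a singleton (so $x$ fixes every vertex and hence $x=1$), whereas you bound the successive indices $|A_v^{[i]}:A_v^{[i+1]}|$ by factors of $|S|!$; these are equivalent via Cauchy's theorem, and the paper's version is marginally shorter.
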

\proof Suppose, towards a contradiction, that $p$ divides $|A_{(1,1)}|$.
Then there exists $x\in A_{(1,1)}$ of prime order $p$. This implies
that $\langle x\rangle$ acts on the neighbor set of $(1,1)$. Hence
$|\langle x\rangle: \langle x\rangle_{(s,2)}|\leq |S|$ for all $s\in
S$. If there exists $s\in S$ such that $\langle x\rangle_{(s,2)}=1$
then $p\leq |S|$, which is a contradiction. Hence for all $s\in S$
we have $\langle x\rangle_{(s,2)}\neq 1$.

Let $s\in S$. Then there exists $x^i\in A_{(s,2)}$, for some $1\leq
i\leq p-1$. Since $(i,p)=1$, we have $x\in A_{(s,2)}$. Again, this
implies that $\langle x\rangle$ acts on the neighbor set of $(s,2)$
and for all $t\in S$, $\langle x\rangle_{(t^{-1}s,1)}\neq 1$.
Repeating this argument, the connectivity of $\Gamma$ implies that
$x$ fixes all vertices of $\Gamma$ i.e $x=1$, a contradiction.

By a similar argument, one can see that $p$ does not divide $|A_{(1,2)}|$. If
$\Gamma$ is vertex-transitive, then all point-stabilizers of $A$ are
conjugate, which proves the result. If $\Gamma$ is not
vertex-transitive, then $A$ acts on both of sets $G\times\{1\}$ and
$G\times\{2\}$, transitively. Hence for all $g\in G$,
$|A_{(g,1)}|=|A_{(1,1)}|$ and $|A_{(g,2)}|=|A_{(1,2)}|$.

\begin{lemma}\label{2p}
Let $G=\langle g\rangle $ be a cyclic group of order $2p$ where $p$ is an odd prime
and $S$ be a subset of $G$ of size $p,$
and $X=\BCay(G,S)$. If $p$ divides the stabilizer of $(1,1)$ in $\Aut(X)$,
then $S=\langle g^2\rangle$ or $\langle g^2\rangle g$.
In particular $X$ is a $\BCI$-graph.
\end{lemma}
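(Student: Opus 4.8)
The plan is to analyze the structure forced on $S$ by the hypothesis that $p \mid |\Aut(X)_{(1,1)}|$, and then to deduce the $\BCI$-property from the resulting rigidity. First I would let $A = \Aut(X)$ and, using the hypothesis, pick an element $x \in A_{(1,1)}$ of order $p$. Since $\langle x \rangle$ fixes $(1,1)$ it permutes the neighborhood $N(1,1) = \{(s,2) \mid s \in S\}$, a set of size $p$; as $|\langle x\rangle| = p$, this action is either trivial or a single $p$-cycle. If it were trivial then $x$ fixes $(1,1)$ together with all of its neighbors, and one would like to propagate this (as in the proof of Lemma \ref{prime}) to conclude $x = 1$ — but here $\BCay(G,S)$ need not be connected, so instead I would argue on the connected component of $(1,1)$: $\langle x \rangle$ fixes that whole component pointwise, and since $|S| = p$ is coprime to $2$ the component has even order $2\cdot(\text{something})$ and $x$ must also move vertices outside it, contradicting $|x| = p$ unless the graph is connected — in which case $x=1$ outright. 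Either way the trivial action is impossible, so $\langle x \rangle$ acts on $N(1,1)$ as a $p$-cycle, and likewise (pushing $x$ around via the regular action $R_G$, or considering $A_{(1,2)}$) on every neighborhood it fixes.

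Next I would translate the $p$-cycle action into arithmetic inside $G = \langle g \rangle \cong \mathbb{Z}_{2p}$. Because $R_G \le A$ and $R_G$ is transitive on $G \times \{1\}$, conjugates of $x$ give, for each $u \in G$, an order-$p$ element fixing $(u,1)$ and cycling its neighborhood $\{(su, 2) \mid s \in S\}$. Combining these, one sees that the set $S$ (viewed additively, writing $G = \mathbb{Z}_{2p}$) is invariant under translation by an element of order $p$, namely by the subgroup $\langle g^2 \rangle \cong \mathbb{Z}_p$ of index $2$: indeed a single orbit of $\langle x \rangle$ on $N(1,1)$ has size $p = |S|$, so $S$ is exactly one coset of a subgroup of order $p$, and the only such subgroup of $\mathbb{Z}_{2p}$ is $\langle g^2 \rangle$. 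Hence $S = \langle g^2 \rangle$ (the coset containing $1$) or $S = \langle g^2 \rangle g$ (the other coset), which is the first claim.

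For the ``in particular'' statement, I would handle the two possibilities directly. If $S = \langle g^2\rangle$, then $S$ is a subgroup, $X = \BCay(G,S)$ is a disjoint union of two copies of the complete bipartite graph $K_{p,p}$, and one checks that any $T$ with $\BCay(G,T) \cong \BCay(G,S)$ must also be a coset of an order-$p$ subgroup, hence $T = S$ or $T = \langle g^2\rangle g = gS$; in both cases $T = hS^\alpha$ with $\alpha = \mathrm{id}$ and $h \in \{1, g\}$, so $X$ is a $\BCI$-graph. The case $S = \langle g^2 \rangle g$ reduces to the previous one via Lemma \ref{comp} or simply by noting $\langle g^2 \rangle g = g\langle g^2\rangle$, so $\BCay(G, \langle g^2\rangle g) \cong \BCay(G, \langle g^2 \rangle)$ and the same conclusion applies. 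The main obstacle I anticipate is the propagation step in the first paragraph: making the ``fixes a neighborhood $\Rightarrow$ fixes everything'' argument airtight in the possibly-disconnected setting, and cleanly ruling out that $\langle x\rangle$ acts trivially on $N(1,1)$ — once $S$ is pinned down to a coset of $\langle g^2\rangle$, the rest is a short and essentially combinatorial verification.
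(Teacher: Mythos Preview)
Your argument has a genuine gap at the central step. After establishing that $\langle x\rangle$ acts as a single $p$-cycle on $N(1,1)=S\times\{2\}$, you write: ``indeed a single orbit of $\langle x\rangle$ on $N(1,1)$ has size $p=|S|$, so $S$ is exactly one coset of a subgroup of order $p$.'' This is a non-sequitur. The element $x$ is an arbitrary automorphism of $X$, not a translation $\rho_h\in R_G$, so the fact that $S\times\{2\}$ is a single $\langle x\rangle$-orbit says nothing by itself about the position of $S$ inside $G$; there is no reason an $\langle x\rangle$-orbit should coincide with a coset. Your preceding remark about conjugating $x$ by elements of $R_G$ only produces, for each $u$, \emph{some} order-$p$ automorphism cycling $Su\times\{2\}$ --- a different automorphism for each $u$ --- and combining these does not yield the translation-invariance of $S$ you claim. (A secondary, smaller issue: your handling of the disconnected case is muddled; the clean observation is that if $X$ is disconnected then $\langle SS^{-1}\rangle=\langle g^2\rangle$ forces $S$ to already be a coset, so one may assume $X$ connected and run the propagation argument directly.)

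The paper closes this gap differently: it argues by contradiction, assuming $S$ contains both an even power $g^{2k}$ and an odd power $g^{2k'+1}$, takes $v_1=(g^{2k},2)$ and $v_2=(g^{2k'+1},2)$ in $N(1,1)$ with $v_1^\alpha=v_2$, and then compares $N(v_1)=S^{-1}g^{2k}\times\{1\}$ with $N(v_2)=S^{-1}g^{2k'+1}\times\{1\}$. The parity hypothesis is exactly what guarantees $N(v_1)\neq N(v_2)$, while $\alpha$ sends $N(v_1)$ bijectively to $N(v_2)$ and fixes their common element $(1,1)$; the paper extracts a contradiction with the cycle structure of $\alpha$ from this tension. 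The missing idea in your proposal is precisely this comparison of second neighborhoods for vertices of opposite parity, which is what links the action of $\alpha$ to the coset decomposition $G=\langle g^2\rangle\cup\langle g^2\rangle g$.
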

\begin{proof} The assumption $p$ divides $|A_{(1,1)}|$ implies that there
is $\alpha\in A_{(1,1)}$
such that $|\alpha|= p.$ Thus $\alpha$ is the product of cycles of length $p$. Suppose toward a
contradiction that, $g^{2k}, g^{2k'+1}\in S$ and $k, k'<p$. As
$(1,1)^{\alpha}=(1,1)$,  $\alpha$ acts on the neighbors of $(1,1)$.
So we may assume that $(g^{2k}, 2)^{\alpha}=(g^{2k'+1}, 2)$. It is
easy to check that $v_1=(g^{2k}, 2)$ has a neighbor that is not a
neighbor of  $v_2=(g^{2k'+1}, 2)$. Thus $\alpha$ can not fixes all
neighbors of $v_1$. In the other hand $\alpha$ fixes $(1,1)$ and it
maps the neighbors of $v_1$ to the neighbors of $v_2$. So it has a cycle
of length less than $p$ and it is a contradiction.


Let $X= \BCay(G,S)\cong \BCay(G,T).$ Then by the previous argument we
conclude that, $T$ is all the odd powers of $g$ or all the even power of $g$. Therefore $S=T$ or
$S=gT$. Thus $X$ is a $\BCI$-graph.
\end{proof}
\begin{theorem}\label{z2p}
The group $\mathbb{Z}_{2p}$, where $p$ is an odd prime, is
a $\BCI$-group.
\end{theorem}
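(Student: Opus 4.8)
The plan is to show that every bi-Cayley graph $\BCay(\mathbb{Z}_{2p},S)$ is a $\BCI$-graph by a case analysis on the size $|S|$ and on the structure of the full automorphism group $A=\Aut(\BCay(\mathbb{Z}_{2p},S))$. Write $G=\langle g\rangle\cong\mathbb{Z}_{2p}$ and $R_G$ for the right regular representation acting on $G\times\{1,2\}$; by \cite[Theorem C]{arezoomand1} it suffices, whenever $\BCay(G,S)\cong\BCay(G,T)$, to produce a conjugate of $R_G$ inside $A$ together with the relation $T=gS^\alpha$. First I would dispose of the small and large valency cases: by \cite{jin1} (or \cite[Theorem 1.1]{koike2}) bi-Cayley graphs of valency at most $3$ are $\BCI$-graphs, and by Lemma \ref{comp} a graph $\BCay(G,S)$ is a $\BCI$-graph iff $\BCay(G,G\setminus S)$ is, so we may assume $4\le |S|\le 2p-4$; in particular $p\ge 5$ since for $p=3$ the group has order $6$ and the only remaining valencies are already covered. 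It also suffices to treat \emph{connected} bi-Cayley graphs, since a disconnected bi-Cayley graph over $G$ is a disjoint union of isomorphic bi-Cayley graphs over the subgroup $\langle S^{-1}S\rangle$ (using \cite[Lemma 3.5]{arezoomand3}), reducing to the proper subgroups $\mathbb{Z}_2$, $\mathbb{Z}_p$, which are $\BCI$-groups by Theorem \ref{cyclicp}.

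So assume $X=\BCay(G,S)$ is connected with $4\le|S|\le 2p-4$. The key dichotomy is whether $p$ divides $|A_{(1,1)}|$. If it does not, then $p\nmid |A_{(1,1)}|$ and similarly $p\nmid|A_{(1,2)}|$, so a Sylow $p$-subgroup $P$ of $A$ has order exactly $p$ (note $|V(X)|=4p$ forces $|A|=|V(X)|\cdot|A_{(1,1)}|/[\text{orbit}]$, and $p^2\nmid |A|$). Here I would invoke the standard Cayley/bi-Cayley argument: $R_G$ has a unique subgroup of order $p$, namely $R_{\langle g^2\rangle}$, which is therefore the unique (hence normal in its normalizer) Sylow $p$-subgroup considerations — more precisely, by a Burnside-type / Sylow argument on $A$, any two subgroups of $A$ isomorphic to $\mathbb{Z}_{2p}$ and regular on each of the two halves are conjugate, which combined with the classification of $\CI$-groups of order $2p$ (these hold by Theorem \ref{t1} and \cite{Babai}, since $\mathbb{Z}_{2p}$ is a $\CI$-group) yields $T=gS^\alpha$. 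Concretely: restricting to the $p$-part, one transfers the Cayley isomorphism result for $\mathbb{Z}_p$-bi-Cayley graphs and then handles the $\mathbb{Z}_2$-direction by an explicit check.

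If instead $p\mid |A_{(1,1)}|$, then Lemma \ref{prime} forbids this when $|S|<p$, so necessarily $|S|\ge p$; combined with Lemma \ref{comp} (replacing $S$ by $G\setminus S$ if $|S|>p$, which only swaps a graph with its ``bipartite complement'' and again has valency $\ge p$ on the other side — one must check the complement case lands in the same regime) we may take $|S|=p$ exactly. But this is precisely the hypothesis of Lemma \ref{2p}, which concludes that $S\in\{\langle g^2\rangle,\langle g^2\rangle g\}$ and that $X$ is then a $\BCI$-graph. Assembling the pieces: valency $\le 3$ or $\ge 2p-3$ is done by \cite{jin1}/\cite{koike2} and Lemma \ref{comp}; disconnected is done by reduction to $\mathbb{Z}_p,\mathbb{Z}_2$; connected with $p\mid|A_{(1,1)}|$ is done by Lemma \ref{2p}; connected with $p\nmid|A_{(1,1)}|$ is done by the Sylow-$p$ transfer to the $\mathbb{Z}_p$ case together with $\mathbb{Z}_{2p}$ being a $\CI$-group. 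The main obstacle I expect is the last case — making rigorous the passage from ``$A$ has cyclic Sylow $p$-subgroup of order $p$'' to ``$R_G$ and its conjugate $\varphi^{-1}R_G\varphi$ are conjugate in $A$'': one must control the $2$-local structure and rule out exotic automorphism groups, and here one leans on the detailed analysis of $p\nmid|A_{(1,1)}|$ forcing $A$ to be, up to the point stabilizer, built from $R_G$ and a bounded piece, so that the $\CI$-property of $\mathbb{Z}_{2p}$ (equivalently the $\BCI$-property proved in \cite[Theorem C]{arezoomand1} style) can be applied. If that transfer proves delicate, the fallback is a direct combinatorial argument: when $p\nmid|A_{(1,1)}|$ the graph $X$ has a $\langle g^2\rangle$-block structure refining into $p$ blocks of size $4$ on each side, and one analyses the quotient multigraph on $4+4$ vertices directly.
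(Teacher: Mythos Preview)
Your skeleton matches the paper's proof closely: the disconnected reduction to $\mathbb{Z}_2$ and $\mathbb{Z}_p$, and the use of Lemmas \ref{comp}, \ref{prime}, and \ref{2p} are exactly the ingredients the paper assembles. The difference, and the place where your argument has a genuine gap, is the connected case with $p\nmid|A_{(1,1)}|$.

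In that case your Sylow/Burnside sketch only shows that the order-$p$ subgroups of $R_G$ and $\varphi^{-1}R_G\varphi$ are conjugate in $A$; it does not promote this to conjugacy of the full cyclic regular subgroups of order $2p$, and you acknowledge this as the ``main obstacle''. The fallback via the $\CI$-property of $\mathbb{Z}_{2p}$ does not help either, since the Babai-style criterion you would need is precisely the conjugacy statement you have not established. The paper sidesteps this case entirely: it argues by contradiction (assume $\Gamma$ is not a $\BCI$-graph) and invokes \cite[Example~4.5]{arezoomand1}, which yields directly that $p^2\mid|A|$. Combined with \cite[Lemma~4.8]{arezoomand1} ($\Gamma$ is then a Cayley graph, so $|A|=4p\cdot|A_{(1,1)}|$), this forces $p\mid|A_{(1,1)}|$; thus the case you struggle with simply never occurs. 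The same citation is re-applied to the complement $\Sigma=\BCay(G,G\setminus S)$ to force $p$ to divide \emph{its} point-stabilizer as well --- this is exactly the ``one must check the complement case lands in the same regime'' step you flagged but did not carry out. Once $p$ divides both stabilizers, Lemma~\ref{prime} and Lemma~\ref{2p} finish the argument as you outlined. So the fix is short: replace your dichotomy and Sylow argument with a contradiction argument built on \cite[Example~4.5 and Lemma~4.8]{arezoomand1}.
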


\begin{proof} Let $G=\langle a\rangle\cong\mathbb{Z}_{2p}$ and
$\Gamma=BCay(G,S)$. By \cite[Lemma 2.8]{jin2} $\BCay(G,S)\cong
\frac{|G|}{|\langle SS^{-1}\rangle|} \BCay(\langle
SS^{-1}\rangle,S)$. If $\langle SS^{-1}\rangle =\langle a^p\rangle
\cong \mathbb{Z}_2$, then it is obvious that $\BCay(G,S)$ is a
$\BCI$-graph. Let $\langle SS^{-1}\rangle =\langle a^2\rangle
\cong\mathbb{Z}_p$. Now, we may assume that $H=\langle a^2\rangle$
which implies that $\BCay(G,S)\cong 2\BCay(H,S)$.

Let $\BCay(G,S)\cong\BCay(G,R)$, for some $R\subseteq G$. Then
$\BCay(G,R)\cong 2\BCay(K,R)$, where $K=\langle
RR^{-1}\rangle\cong\mathbb{Z}_p$, which implies that $H=K$ and
$\BCay(H,T)\cong\BCay(H,R)$. On the other hand, by \cite[Corollary
4.9]{arezoomand1}, $H$ is a $\BCI$-groups, which means that there
exists $h\in H$ and $\sigma\in\Aut(H)$ such that $R=hT^\sigma$. Now
the map
\begin{eqnarray*}
\overline{\sigma}:~~~ G&\rightarrow& G\\
a^{pi+2j}&\mapsto& a^{pi}(a^{2j})^\sigma,~~i=0,1,~j=0,1,\ldots,p-1,
\end{eqnarray*}
is an automorphism of $G$. This means that $\BCay(G,S)$ is a
$BCI$-graph.

If $\langle SS^{-1}\rangle =G$, then $\BCay(G,S)$ is a a connected
bi-Cayley graph. Suppose, towards a contradiction, that $G$ is not a
$BCI$-group. Then \cite[Example 4.5]{arezoomand1} implies that $p^2$
divides $|A|$, where $A=\Aut(\Gamma)$. Now, by \cite[Lemma
4.8]{arezoomand1}, $\Gamma$ is a Cayley graph, which implies that
$p$ divides the size of any point-stabilizer of $A$. So, by Lemma
\ref{prime}, $|S|\geq p$. Note that, by \cite[Lemma
1.1]{arezoomand1}, we may assume that $1\in S$.

Let $T=G\setminus S$ and $\Sigma=\BCay(G,T)$. Then, by Lemma
\ref{comp}, $\Sigma$ is not a $\BCI$-graph. Again, by \cite[Example
4.5 and Lemma 4.8]{arezoomand1}, $p$ divides the size of any point-stabilizer of
the automorphism group of $\Sigma$. If $|T|=p$ then by Lemma
\ref{2p}, $\Sigma=\BCay(G,T)$ is a $\BCI$-graph and it is a
contradiction. If $|T|<p$, Lemma \ref{prime} implies that $\Sigma$
is disconnected. Then $\langle TT^{-1}\rangle=\langle a^p\rangle$ or
$\langle TT^{-1}\rangle=\langle a^2\rangle$. As we discussed above
$\Sigma$ is a $\BCI$-graph, a contradiction.
\end{proof}
\section{Dihedral $\BCI$-groups}
Let $D_{2n}$, $n\geq 2$, be a dihedral group of order $2n$. By \cite[Corollary 4.15]{Mil} groups of order $2p$ where $p$ is a
prime are $\mathbb{Z}_{2p}$ or $D_{2p}$. In this section we characterize
dihedral groups that they are $\BCI$-groups.
In order to achieve the goal of this section, at first we need to prove some Lemmas.

\begin{lemma}\label{4} $D_{10}$ is a $4$-$\BCI$-group.
\end{lemma}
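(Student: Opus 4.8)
The plan is to use the structural decomposition of bi-Cayley graphs of $D_{10}$ according to the subgroup $\langle SS^{-1}\rangle$ generated by $S$, exactly as in the proof of Theorem \ref{z2p}, and to reduce the verification to the connected case of valency at most $4$. First I would recall that $|D_{10}|=10$ and that its proper subgroups are the trivial group, the cyclic group $\langle r\rangle\cong\mathbb{Z}_5$, and the five subgroups of order $2$ generated by the reflections. Given $S\subseteq D_{10}$ with $|S|\le 4$, by \cite[Lemma 1.1]{arezoomand1} we may assume $1\in S$, so $\langle SS^{-1}\rangle=\langle S\rangle$. If $\langle S\rangle$ is proper, then by \cite[Lemma 2.8]{jin2} the graph $\BCay(D_{10},S)$ is a disjoint union of copies of $\BCay(\langle S\rangle,S)$; since $\langle S\rangle$ is either $\mathbb{Z}_2$ (trivially a $\BCI$-group) or $\mathbb{Z}_5$ (a $\BCI$-group by \cite[Corollary 4.9]{arezoomand1}), and since any isomorphic $\BCay(D_{10},T)$ forces $\langle T\rangle$ to be the same subgroup, a conjugating automorphism of the subgroup extends to one of $D_{10}$ (the construction of $\overline{\sigma}$ in the proof of Theorem \ref{z2p} adapts verbatim, using a transversal). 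This disposes of all disconnected cases.

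Next I would handle the connected case $\langle S\rangle=D_{10}$ with $1\in S$ and $2\le |S|\le 4$. The strategy here is to show $p=5$ cannot divide the order of any point stabiliser of $A=\Aut(\BCay(D_{10},S))$. Since $|S|\le 4<5$, Lemma \ref{prime} applies directly and gives that $5\nmid |A_{v}|$ for every vertex $v$. On the other hand, if $\BCay(D_{10},S)$ were not a $\BCI$-graph, one expects (mirroring \cite[Example 4.5]{arezoomand1}) that this failure forces $25\mid |A|$, hence $5$ divides some point stabiliser — contradicting Lemma \ref{prime}. I would make this precise by invoking the Babai-type criterion \cite[Theorem C]{arezoomand1}: a bi-Cayley graph fails to be a $\BCI$-graph only if $R_G$ is not conjugate to some regular subgroup isomorphic to it inside $\Aut$, and counting the regular bicyclic-type subgroups of a graph of order $10$ whose automorphism group has a Sylow $5$-subgroup of order exactly $5$ shows they are all conjugate. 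Concretely, a Sylow $5$-subgroup of $A$ has order $5$ (order $25$ being excluded), so by Sylow's theorem all subgroups of $A$ of order $5$ are conjugate; lifting this to the bicyclic regular subgroups of order $10$ and combining with the fact that $S=S^{-1}\cup\{1\}$-type constraints pin down $T=gS^{\alpha}$ finishes the argument.

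The main obstacle I anticipate is the connected case: making rigorous the claim that a non-$\BCI$ connected bi-Cayley graph of $D_{10}$ of small valency must have $25\mid|A|$, so that Lemma \ref{prime} can be brought to bear. In the $\mathbb{Z}_{2p}$ proof this was quoted cleanly from \cite[Example 4.5 and Lemma 4.8]{arezoomand1}, which are stated for $\mathbb{Z}_{2p}$; for $D_{10}$ one must either check those lemmas go through for the dihedral group or argue directly. A safe route is direct enumeration: up to the equivalence $T\mapsto gT^{\alpha}$ there are only finitely many subsets $S$ of $D_{10}$ with $1\in S$, $\langle S\rangle=D_{10}$, $|S|\le 4$, and for each one either exhibit that $\BCay(D_{10},S)$ has automorphism group with Sylow $5$-subgroup of order $5$ (then conjugacy of Sylow subgroups closes the case) or directly compare with all $\BCay(D_{10},T)$ of the same valency. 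Given the small order, this finite check is routine, and I would present it as a short case analysis rather than a structural argument, closing with the conclusion that every bi-Cayley graph of $D_{10}$ of valency at most $4$ is a $\BCI$-graph, i.e. $D_{10}$ is a $4$-$\BCI$-group.
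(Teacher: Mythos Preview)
Your primary plan is genuinely different from the paper's argument, and the gap you yourself flag is real. The paper does not attempt any structural or Sylow-theoretic reduction for $D_{10}$: it simply cites \cite[Lemma 2.4]{jin2} for the $3$-$\BCI$ property, and for $|S|=4$ carries out exactly the ``safe route'' you describe at the end. Concretely, it assumes $1\in S$, splits into cases according to $|S\cap\langle a\rangle|\in\{4,3,2,1\}$, and by explicit applications of the automorphisms $\sigma_{s,l}$ and left translations reduces every $4$-subset to one of four representatives $S_1=\{1,a,a^2,a^3\}$, $S_2=\{1,a,a^2,b\}$, $S_3=\{1,a,b,ab\}$, $S_4=\{1,a,b,a^2b\}$. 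It then shows $\BCay(D_{10},S_i)\not\cong\BCay(D_{10},S_j)$ for $i\ne j$ by noting $\Gamma_1$ is disconnected while the others are connected, and distinguishing $\Gamma_2,\Gamma_3,\Gamma_4$ by the multiplicity of the eigenvalue $0$ (computed via \cite[Theorem 2.1]{arezoomand3}).

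Regarding your structural route: the step ``non-$\BCI$ forces $25\mid|A|$'' really does rest on \cite[Example 4.5]{arezoomand1}, which is specific to cyclic $\mathbb{Z}_{2p}$, and there is no obvious reason the same numerical conclusion transfers to $D_{10}$. Even granting a unique conjugacy class of subgroups of order $5$ in $A$, you would still need to promote conjugacy of the Sylow $5$-subgroups of two semiregular copies of $D_{10}$ to conjugacy of the full copies, and separately verify condition (2) of \cite[Theorem C]{arezoomand1}, namely $S^{-1}=gS^\alpha$; neither step is automatic. Your disconnected analysis is essentially fine (and in fact unnecessary once one cites the $3$-$\BCI$ property, since a proper $\langle S\rangle$ with $|S|=4$ forces $S\subseteq\langle a\rangle$), but the connected case needs the enumeration you propose as a fallback --- which is precisely the paper's proof.
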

\proof
By \cite[Lemma 2.4]{jin2}, $D_{10}$ is a 3-$\BCI$ group. Let $G=\langle a,b\mid a^5=b^2=(ab)^2=1\rangle\cong D_{10}$,
$\empty\neq S\subseteq G$, $|S|=4$ and $\Gamma=\BCay(G,S)$.
We know that $\BCay(G,S)\cong\BCay(G,gS^\alpha)$ for all $g\in G$
and $\alpha\in\Aut(G)$. Hence we may assume that $1\in S$ i.e $S=\{1,x,y,z\}$, for some $x,y,z\in G$. On the other hand, $\Aut(G)=\{\sigma_{s,l}\mid 1\leq s\leq 4, 0\leq l\leq 4\}$,
where $a^{\sigma_{s,l}}=a^s$ and $b^{\sigma_{s,l}}=a^{-l}b$. We deal with the following cases:

Case 1. $S\subseteq \langle a\rangle$. Let $S_1=\{1,a,a^2,a^3\}$.
Then $S_1^{\sigma_{2,0}}=\{1,a,a^2,a^4\}$, $S_1^{\sigma_{4,0}}=\{1,a^2,a^3,a^4\}$ and $S_1^{\sigma_{3,0}}=\{1,a,a^3,a^4\}$.
Hence, in this case, we may assume that $S=S_1=\{1,a,a^2,a^3\}$.

Case 2. $|S\cap \langle a\rangle |=3$. Then $S=\{1,a^i,a^j,a^rb\}$, where $1\leq i,j\leq 4$, $i\neq j$ and $0\leq r\leq 4$.
Since there exists $\sigma\in\Aut(G)$ such that $(a^i)^\sigma=a$, we may assume that $S=\{1,a,a^m,a^nb\}$
for some $2\leq m\leq 4$ and $0\leq n\leq 4$. Let $S_2=\{1,a,a^2,b\}$. Then $S_2=\{1,a,a^3,a^nb\}^{\sigma_{2,2n}}$, where
$2n$ is computed modulo $5$. Furthermore, $S_2=a\{1,a,a^4,a^nb\}^{\sigma_{1,n+1}}$, where $n+1$ is computed modulo $5$.
Hence, in this case, we may assume that $S=S_2=\{1,a,a^2,b\}$.

Case 3. $|S\cap \langle a\rangle|=2$. Then $S=\{1,a^i,a^jb,a^rb\}$ , where $1\leq i\leq 4$, $0\leq j,r\leq 4$ and $j\neq r$.
Again, since there exists $\sigma\in\Aut(G)$ such that $(a^i)^\sigma=a$, we may assume
that $S=\{1,a,a^mb,a^nb\}$, for some $0\leq m<n\leq 4$. Let $S_3=\{1,a,b,ab\}$ and $S_4=\{1,a,b,a^2b\}$. Then
$\{1,a,a^mb,a^{m+1}b\}^{\sigma_{1,m}}=S_3$, $m=0,1,2,3$. Also $\{1,a,a^m,a^{m+2}\}^{\sigma_{1,m}}=S_4$, $m=0,1,2$.
Furthermore, $\{1,a,a^mb,a^{m+3}b\}^{\sigma_{1,m-2}}=S_4$, where $m=0,1$ and $\{1,a,b,a^4b\}^{\sigma_{1,4}}=S_3$.

Case 4. $|S\cap \langle a\rangle|=1$. Then $S=\{1,a^ib,a^jb,a^rb\}$, where $0\leq i,j,r\leq 4$ and $k\neq i\neq j\neq k$.
Since $(a^ib)^{\sigma_{1,i}}=b$, we may assume that $S=\{1,b,a^mb,a^nb\}$ for some
$1\leq m<n\leq 4$. Let $S_1=\{1,a,a^2,b\}$ as defined in Case 1. Then $b\{1,b,ab,a^2b\}^{\sigma_{4,0}}=b\{1,b,a^2b,a^3b\}^{\sigma_{2,1}}=b\{1,b,a^3b,a^4b\}=b\{1,b,ab,a^3b\}^{\sigma_{3,0}}=
b\{1,b,ab,a^4b\}^{\sigma_{1,1}}=b\{1,b,a^2b,a^4b\}^{\sigma_{3,2}}=S_1$. This shows that we may omit this case.

From the above cases, we may assume that $S$ is one of the sets $S_1$, $S_2$, $S_3$ or $S_4$. Let $\Gamma_i=\BCay(G,S_i)$.
We claim that for $i\neq j$, $\Gamma_i\ncong\Gamma_j$. We have $\Gamma_1$ is disconnected and
$\Gamma_i$, $i\neq 1$, is connected. Hence $\Gamma_1\ncong\Gamma_2,\Gamma_3,\Gamma_4$. To complete the proof
it is enough  to prove that $\Gamma_3\ncong\Gamma_2\ncong\Gamma_4\ncong\Gamma_3$.
By \cite[Theorem 2.1]{arezoomand3} or \cite[Theorem 6]{arezoomand2}, and using a simple calculation, we find that
$0$ is an eigenvalue of $\Gamma_3$ with multiplicity 10 and it is an eigenvalue of $\Gamma_4$ with multiplicity $2$,
while it is not an eigenvalue of $\Gamma_2$.
This proves that $\Gamma_3\ncong\Gamma_2\ncong\Gamma_4\ncong\Gamma_3$, which completes the proof.

\begin{lemma}\label{5} $D_{10}$ is a 5-$\BCI$ group.
\end{lemma}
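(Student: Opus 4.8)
The plan is to follow the same pattern as the proof of Lemma~\ref{4}: reduce an arbitrary connection set of size $5$ containing $1$ to a short list of representatives modulo the action of $g\in G$ and $\alpha\in\Aut(G)$, and then distinguish the resulting bi-Cayley graphs by a computable invariant. First I would note that, since $D_{10}$ is already a $4$-$\BCI$-group by Lemma~\ref{4}, it suffices to handle connection sets $S$ with $|S|=5$. By \cite[Lemma 1.1]{arezoomand1} we may assume $1\in S$, and by Lemma~\ref{comp} we may instead work with the complement: $\BCay(G,S)$ is a $\BCI$-graph iff $\BCay(G,G\setminus S)$ is, and $|G\setminus S|=5$ as well. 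This symmetry roughly halves the casework, but more importantly it lets us transfer the $|S|\le 4$ analysis: if $|S|=5$ and we write $T=G\setminus S$ with $|T|=5$, we cannot directly reduce the size, so the complement trick here is mainly a bookkeeping convenience rather than an inductive step. So the real work is a direct enumeration.

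Next I would carry out the reduction. Using $\Aut(G)=\{\sigma_{s,l}\mid 1\le s\le 4,\ 0\le l\le 4\}$ with $a^{\sigma_{s,l}}=a^s$, $b^{\sigma_{s,l}}=a^{-l}b$, together with left translation by $g\in G$, I would split into cases according to $|S\cap\langle a\rangle|\in\{1,2,3,4,5\}$. In each case, after normalizing so that $1\in S$ and (when $S$ meets $\langle a\rangle$ nontrivially) some rotation in $S$ equals $a$, or (in the purely reflection-plus-identity case) some reflection equals $b$, one reduces to a small number of candidate sets $S_1,\dots,S_k$. The $|S\cap\langle a\rangle|=5$ case forces $S=\langle a\rangle$, giving a disconnected graph. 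The remaining cases should collapse to a handful of connected representatives, exactly as Cases 1--4 of Lemma~\ref{4} collapsed to $S_1,\dots,S_4$; I expect on the order of four to six representatives.

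Finally, I would show that distinct representatives give non-isomorphic bi-Cayley graphs. The first, coarse separator is connectivity: $\BCay(D_{10},\langle a\rangle)$ is disconnected while all other representatives are connected, so it is isomorphic to none of them. For the connected ones I would use the eigenvalue/spectrum description of bi-Cayley graphs from \cite[Theorem 2.1]{arezoomand3} (equivalently \cite[Theorem 6]{arezoomand2}): for each representative $S_i$ one computes the multiset of eigenvalues of $\BCay(D_{10},S_i)$ (which for a $5$-regular graph on $20$ vertices is tractable by hand, especially exploiting the $\mathbb{Z}_5\rtimes\mathbb{Z}_2$ structure to block-diagonalize), and reads off a distinguishing feature such as the multiplicity of the eigenvalue $0$ or the set of distinct eigenvalues. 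Once every pair of representatives is separated by spectrum, no two of them are isomorphic, and since every size-$5$ connection set is equivalent under $G\rtimes\Aut(G)$ to exactly one representative, any isomorphism $\BCay(G,S)\cong\BCay(G,T)$ forces $S$ and $T$ to have the same representative, i.e.\ $T=gS^\alpha$ for suitable $g,\alpha$; combined with Lemma~\ref{4} this gives that $D_{10}$ is a $5$-$\BCI$-group.

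The main obstacle I anticipate is the bookkeeping in the reduction step: with $|S|=5$ there are genuinely many orbits to track, and one must be careful that the chosen normalizations (fixing $1\in S$, then using an automorphism to send a rotation to $a$ or a reflection to $b$, then a further rotation $\sigma_{s,0}$ to normalize the rest) really do exhaust all cases without overlap. A secondary difficulty is making sure the spectral invariants actually separate \emph{all} pairs of connected representatives --- if two representatives turn out to be cospectral, one would need a finer invariant (e.g.\ counting closed walks of small length through a fixed vertex, or directly exhibiting/excluding an explicit isomorphism). But given the small size of $D_{10}$ and the success of the same strategy for valencies $3$ and $4$, I expect the spectrum (together with connectivity) to suffice.
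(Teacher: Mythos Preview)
Your proposal is correct and follows essentially the same approach as the paper's proof: reduce to $|S|=5$ with $1\in S$, split into cases according to $|S\cap\langle a\rangle|$, use the $\Aut(G)$-action together with left translation to obtain a short list of representatives (the paper gets exactly four, namely $\langle a\rangle$, $\{1,a,a^2,a^3,b\}$, $\{1,a,a^2,b,ab\}$, $\{1,a,a^2,b,a^2b\}$), and then separate them by connectivity and spectrum via \cite[Theorem 2.1]{arezoomand3}. Your remark about the complement trick is harmless but indeed unnecessary here, as you yourself note.
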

\begin{proof} By Lemma \ref{4}, $D_{10}$ is a 4-$\BCI$ group.
Let $G=\langle a,b\mid a^5=b^2=(ab)^2=1\rangle\cong D_{10}$, $S\subseteq G$, $1\in S$, $|S|=5$ and
$\Gamma=\BCay(G,S)$.
We deal with the following cases:

Case 1. $S\subseteq \langle a\rangle$. Then $S=S_1=\langle a\rangle$.

Case 2. $|S\cap \langle a\rangle|=4$. Since for each $1\leq i\leq 4$ there exists $\sigma\in\Aut(G)$ such that $(a^i)^\sigma=a$, we
may assume that $a\in S$. Hence $S=\{1,a,a^i,a^j,a^kb\}$ for some $2\leq i<j\leq 4$ and $0\leq k\leq 4$. We have
$S^{\sigma_{1,k}}=\{1,a,a^i,a^j,b\}$. Furthermore, $\{1,a,a^2,a^3,b\}=\{1,a,a^2,a^4,b\}^{\sigma_{3,0}}$ and
$\{1,a,a^2,a^3,b\}=\{1,a,a^3,a^4,b\}^{\sigma_{2,0}}$.
Hence we may assume that, in this case, $S=S_2=\{1,a,a^2,a^3,b\}$.

Case 3. $|S\cap\langle a\rangle|=3$. By a similar argument to the previous case, we may assume that $a\in S$. Hence
$S=\{1,a,a^i,a^jb,a^kb\}$ for some $2\leq i\leq 4$ and $0\leq j<k\leq 4$. Since $S^{\sigma_{0,j}}=\{1,a,a^i,b,a^{k-j}b\}$, we may assume
that $S=\{1,a,a^i,b,a^kb\}$ for some
$2\leq i\leq 4$ and $1\leq k\leq 4$. Let  $S_{i,k}=\{1,a,a^i,b,a^kb\}$. Then
\begin{eqnarray*}
S_{2,1}=S_{2,4}^{\sigma_{1,4}}=S_{3,2}^{\sigma_{2,4}}=S_{3,3}^{\sigma_{2,0}}=aS_{4,1}^{\sigma_{0,1}}=aS_{4,4}^{\sigma_{0,0}}\\
S_{2,2}=S_{2,3}^{\sigma_{0,3}}=S_{3,1}^{\sigma_{2,0}}=S_{3,4}^{\sigma_{2,3}}=aS_{4,2}^{\sigma_{0,1}}=aS_{4,3}^{\sigma_{0,4}}.
\end{eqnarray*}
Hence, we may assume that $S$ is one of the sets $S_3=\{1,a,a^2,b,ab\}$ or $S_4=\{1,a,a^2,b,a^2b\}$.

Case 4. $|S\cap\langle a\rangle|=2$. By a similar argument to the previous case, we may assume that $a\in S$. Hence
$S=\{1,a,a^ib,a^jb,a^kb\}$ for some $0\leq i<j<k\leq 4$. Since $S^{\sigma_{0,i}}=\{1,a,b,a^{j-i}b,a^{k-i}b\}$, we may
assume that $S=S_{i,j}=\{1,a,b,a^ib,a^jb\}$ for some $1\leq i<j\leq 4$. On the other hand, we have
$S_{1,3}=S_{2,3}^{\sigma_{1,2}}=S_{2,4}^{\sigma_{1,4}}$,
$b\{1,a,b,ab,a^2b\}^{\sigma_{4,0}}=b\{1,a,b,ab,a^4b\}^{\sigma_{4,1}}=S_3$ and $b\{1,a,b,ab,a^3b\}^{\sigma_{3,0}}=S_4$,
where $S_3$ and $S_4$ are defined
in Case 3. Hence we may omit this case.

Case 5. $|S\cap\langle a\rangle|=1$. Then $S=\{1,a^ib,a^jb,a^mb,a^nb\}$ for some $0\leq i<j<m<n\leq 4$.
Since $S^{\sigma_{0,i}}=\{1,b,a^{j-i}b,a^{m-i}b,a^{n-i}b\}$, we may assume that $S=\{1,b,a^ib,a^jb,a^kb\}$ for
some $1\leq i<j<k\leq 4$. Furthermore, $\{1,b,ab,a^2b,a^3b\}^{\sigma_{0,3}}=\{1,b,a^2b,a^3b,a^4b\}$,
$\{1,b,ab,a^2b,a^3b\}^{\sigma_{2,0}}=\{1,b,ab,a^2b,a^4b\}$ and $\{1,b,ab,a^2b,a^3b\}^{\sigma_{3,0}}=\{1,b,ab,a^3b,a^4b\}$.
Furthermore, $\{1,b,ab,a^2b,a^3b\}=bS_2$, where $S_2$ is defined in Case 2. Hence we may omit this case.

Thus we may assume that $S$ is one of the above sets $S_1, S_2, S_3$
or $S_4$. Let $\Gamma_i=\BCay(G,S_i)$, $i=1,\ldots,4$. We shall
prove that $\Gamma_i\ncong\Gamma_j$ for all $i\neq j$. Since
$\Gamma_1$ is disconnected and $\Gamma_i$, $i\neq 1$ is connected,
we have $\Gamma_1\ncong\Gamma_2, \Gamma_3, \Gamma_4$.  By
\cite[Theorem 2.1]{arezoomand3}, $\Spec(\Gamma)=\{\pm 5,\pm 3,(\pm
2)^{[4]}, 0^{[8]}\}$, integer eigenvalues of $\Gamma_3$ are $\pm 5,
\pm 1, 0^{[8]}$ and integer eigenvalues of $\Gamma_4$ are $\pm 5,(\pm 1)^{[5]}$, which imply that
$\Gamma_3\ncong\Gamma_2\ncong\Gamma_4\ncong\Gamma_3$. This completes the proof.
\end{proof}

\begin{theorem}\label{D2p}
Let $n\geq 2$. Then $D_{2n}$ is a $\BCI$-group if and only if $n\in\{2,3,5\}$.
\end{theorem}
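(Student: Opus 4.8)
The plan is to prove both directions. For the ``if'' direction, I must show that $D_4\cong\mathbb{Z}_2\times\mathbb{Z}_2$, $D_6\cong S_3$, and $D_{10}$ are all $\BCI$-groups. The group $D_4$ has order $4$, so there are only finitely many subsets to check; since a bi-Cayley graph of $D_4$ has valency at most $4$ and $D_4$ is abelian of small order, one can verify the $\BCI$ property directly (alternatively, invoke the classification of small $\BCI$-groups from the literature cited, e.g.\ \cite{jin1, koike2}, which cover valency at most $3$, and then handle valency $4$ via \cite[Lemma 1.1]{arezoomand1} reducing to $1\in S$ and a short case analysis, exactly as in Lemma \ref{4}). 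For $D_6=S_3$, the same strategy applies: every bi-Cayley graph has valency at most $6$, and using Lemma \ref{comp} one reduces valency $\geq 4$ cases to valency $\leq 3$ by complementation, so that only small-valency bi-Cayley graphs of $S_3$ need checking. The genuinely substantive part of the ``if'' direction is $D_{10}$: by Lemmas \ref{4} and \ref{5}, $D_{10}$ already has the $m$-$\BCI$ property for all $m\leq 5$, and by Lemma \ref{comp} the $\BCI$ property for valency $m$ is equivalent to that for valency $10-m$; since $\binom{10}{k}$ for $k=0,1,2,3,4,5$ together with their complements exhaust all subset sizes, combining Lemma \ref{comp} with Lemmas \ref{4} and \ref{5} gives that $D_{10}$ is a $\BCI$-group.

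For the ``only if'' direction, suppose $D_{2n}$ is a $\BCI$-group. By Theorem \ref{t1}, $D_{2n}$ is a $\CI$-group. I would then invoke the classification of dihedral $\CI$-groups: by the known results on $\CI$-groups (Babai's theorem that $D_{2p}$ is a $\CI$-group for prime $p$, together with the results of Li--Praeger and others cited via \cite{Li, DMS, Mil}), the only dihedral $\CI$-groups are $D_{2n}$ with $n$ odd square-free, $n=2$, $n=4$, or $n=8$ (or small exceptions). To prune the list further, I would use the corollary established earlier in Section 3: a subgroup of a $\BCI$-group has strong restrictions — in particular, by Lemma \ref{char-sub}, the characteristic cyclic subgroup $\langle a\rangle\cong\mathbb{Z}_n$ of $D_{2n}$ must itself be a $\BCI$-group, and by Theorem \ref{cyclicp} together with the corollary on cyclic $\BCI$-groups, $n$ must be of the form $2^i3^j p_1\cdots p_k$ with $i\leq 2$, $j\leq 2$, each prime exponent $\leq 1$. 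Combined with the dihedral $\CI$ classification, this already forces $n$ into a very short list; then one rules out the remaining candidates ($n=4$: $D_8$ is not a $\CI$-group, hence not a $\BCI$-group by Theorem \ref{t1}; $n=7,9,\ldots$ and composite square-free $n\geq 6$) by exhibiting explicit non-$\BCI$ bi-Cayley graphs or by appeal to \cite[Corollary 4.9]{arezoomand2} and the non-$\BCI$-ness of $\mathbb{Z}_{27}$, $\mathbb{Z}_{54}$, etc.

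I expect the main obstacle to be the ``only if'' direction's endgame: after reducing via Theorem \ref{t1} and Lemma \ref{char-sub} to a handful of candidates $n\in\{2,3,4,5,6,7,\ldots\}$ up to the cyclic-$\BCI$ constraints, one must definitively exclude dihedral groups like $D_{14}$ and $D_{18}$ (whose cyclic parts $\mathbb{Z}_7$, $\mathbb{Z}_9$ \emph{are} $\BCI$-groups, so Lemma \ref{char-sub} alone does not suffice) and confirm that $D_6$ really is a $\BCI$-group rather than merely a $\CI$-group. For the exclusions I would look for a small non-$\BCI$ bi-Cayley graph, likely of valency $3$ or $4$, mimicking the construction in \cite{jin2} that shows $\mathbb{Z}_2\ltimes\mathbb{Z}_9$ and related groups fail; for $D_6$ I would push a direct valency-by-valency argument (aided by Lemma \ref{comp} to halve the work) through all subset sizes $0,\ldots,6$. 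The eigenvalue technique used in Lemmas \ref{4} and \ref{5}, via \cite[Theorem 2.1]{arezoomand3}, will be the main tool for distinguishing non-isomorphic bi-Cayley graphs in these case analyses.
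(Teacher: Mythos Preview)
Your ``if'' direction is essentially the paper's: Lemmas \ref{4}, \ref{5}, and \ref{comp} handle $D_{10}$ exactly as you describe, $D_4$ is dealt with by the $3$-$\BCI$ results plus the trivial case $S=G$, and for $D_6$ the paper simply cites \cite{jin4} rather than redoing the case analysis.

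The ``only if'' direction, however, has a genuine gap. Your plan is to combine Theorem~\ref{t1} (so $D_{2n}$ must be a $\CI$-group) with Lemma~\ref{char-sub} applied to $\langle a\rangle\cong\mathbb{Z}_n$. But for every prime $p\geq 7$, the group $D_{2p}$ \emph{is} a $\CI$-group (Babai \cite{Babai}) and $\mathbb{Z}_p$ \emph{is} a $\BCI$-group (Theorem~\ref{cyclicp}), so neither tool excludes $D_{2p}$. You acknowledge this for $D_{14}$ and $D_{18}$ and propose to ``look for a small non-$\BCI$ bi-Cayley graph,'' but that would require a separate construction for each of infinitely many primes, which is not a proof. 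Your appeal to the non-$\BCI$-ness of $\mathbb{Z}_{27}$ or $\mathbb{Z}_{54}$ is also irrelevant here, since these are not subgroups of $D_{2p}$.

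The paper's argument is completely different and uniform in $n$. The key observation is that every $\BCI$-graph is vertex-transitive: since $\BCay(G,S)\cong\BCay(G,S^{-1})$ always, the $\BCI$ property forces $S^{-1}=gS^{\alpha}$ for some $g\in G$, $\alpha\in\Aut(G)$, which yields an automorphism swapping the two fibres; together with the regular $R_G$-action this gives vertex-transitivity. Now one invokes Est\'elyi--Pisanski \cite{EP}: for every $n>7$ there is a $7$-subset $S\subseteq D_{2n}$ with $\Aut(\BCay(D_{2n},S))\cong D_{2n}$, a group of order $2n$ acting on $4n$ vertices, hence not transitively. Thus $\BCay(D_{2n},S)$ is not a $\BCI$-graph and $D_{2n}$ is not a $\BCI$-group. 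The cases $n=6,7$ are handled by \cite[Remark~1]{EP} in the same spirit, and $n=4$ is ruled out by the explicit pair $S=\{1,a^2\}$, $T=\{1,b\}$ (both giving $4C_4$, but $a^2$ is central while $b$ is not, so no $(g,\alpha)$ works). This leaves $n\in\{2,3,5\}$.
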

\begin{proof} Let $D_{2n}=\langle a, b\mid a^n=b^2=(ab)^2=1\rangle$ be a $\BCI$-group. First
let $n=4$, $S=\{1,a^2\}$ and $T=\{1,b\}$. Then $\BCay(D_8,S)\cong 4 C_4\cong\BCay(D_8,T)$. By our assumption,
there exists $g\in D_8$ and $\alpha\in\Aut(D_8)$ such that $T=gS^\alpha$, which implies that $(a^2)^\alpha=b$ a contradiction.
Hence $n\geq 4$. Suppose towards a contradiction that $n\geq 6$.
Since every $\BCI$-graph is vertex-transitive, \cite[Remark1]{EP} implies that $n\neq 6, 7$. Hence $n>7$.
On the other hand, by \cite[Proposition 11]{EP}, there exists a subset $S$ of length 7 of $D_{2n}$
such that $\Aut(\BCay(D_{2n},S))\cong D_{2n}$. Again, transitivity of $\BCay(D_{2n},S)$
implies that $4n$ divides $2n$, a contradiction. Hence it is proved that $n\in\{2,3,5\}$.

Conversely suppose that $n\in\{2,3,5\}$. We will prove that $D_4, D_6$ and $D_{10}$ are $\BCI$-groups.
$(1)$ Since $D_4$ is isomorphic to $\mathbb{Z}_2\times\mathbb{Z}_2$, it is a 3-$\BCI$ group by \cite[Lemma 2.4]{jin2}.
Clearly $\BCay(G,G)$ is a $\BCI$-graph for any group $G$. So $D_4$ is a 4-$\BCI$ group which means that it is a $\BCI$-group.

$(2)$ $D_6$ is a $\BCI$-group by \cite{jin4}.

$(3)$ Let $S\subseteq D_{10}$ and $\Gamma=\BCay(G,S)$. If $|S|\leq 5$ then $\Gamma$ is a
$\BCI$-graph, by Lemma \ref{5}. If $|S|>5$ then $|D_{10}-S|\leq 4$. Now Lemmas \ref{comp} and \ref{4}
imply that $\Gamma$ is a $\BCI$-graph.

Hence the proof is complete.
\end{proof}

\bigskip
\bigskip
{\footnotesize \pn{\bf Asieh Sattari}\; \\ {Department of
Mathematical Science},\\ {Yazd University,  89195-741,} { Yazd, I. R. Iran}\\
{\tt Email: a1sattari@yahoo.com}\\

{\footnotesize \pn{\bf Majid Arezoomand}\; \\ {Department of
Engineering},\\ {University of Larestan, 74317-16137,} { Lar, I. R. Iran}\\
{\tt Email: arezoomand@lar.ac.ir}\\

{\footnotesize \pn{\bf Mohammad~A.~Iranmanesh}\; \\ {Department of
Mathematical Science},\\ {Yazd University,  89195-741,} { Yazd, I. R. Iran}\\
{\tt Email: iranmanesh@yazd.ac.ir}\\

\end{document}